\patchcmd{\thebibliography}{\labelsep}{\labelsep\itemsep=2pt \parsep=1.5pt \relax}{}{}
\theoremstyle{definition}
\newtheorem{definition}{Definition}[section]
\newtheorem*{definition*}{Definition}
\newtheorem*{remark*}{Remark}
\theoremstyle{plain}
\newtheorem{theorem}[definition]{Theorem}
\newtheorem{lemma}[definition]{Lemma}
\newtheorem{proposition}[definition]{Proposition}
\newtheorem{claim}[definition]{Claim}
\newtheorem{corollary}[definition]{Corollary}
\newtheorem*{corollary*}{Corollary}
\newenvironment{poc}{\begin{proof}[Proof of claim]
}{\end{proof}}
\def \cl {\colon}
\def \ce {\coloneqq}
\def \E {\mathbb{E}}
\def \P {\mathbb{P}}
\renewcommand{\le}{\leqslant}
\renewcommand{\ge}{\geqslant}
\renewcommand{\leq}{\leqslant}
\renewcommand{\geq}{\geqslant}
\renewcommand \b[2] {\binom{#1}{#2}}
\newcommand*{\rom}[1]{\expandafter\@slowromancap\romannumeral #1@}
\def \mC {\mathcal{C}}
\def \mD {\mathcal{D}}
\def \mF{\mathcal{F}}
\def \mG {\mathcal{G}}
\def \mH{\mathcal{H}}
\def \N {\mathbb{N}}
\def \R {\mathbb{R}}
\def \l {\left}
\def \r {\right}
\newcommand*{\arXiv}[1]{\href{http://arxiv.org/pdf/#1}{arXiv:#1}}
\newcommand{\dst}{d_{\text{st}}}
\newcommand{\dnd}{d_{\text{nd}}}
\newcommand{\fert}{f_{\alpha_s; \text{\tiny$\underbrace{K_3, \cdots, K_3}_t$}}}
\newcommand{\ferj}{f_{\alpha_s; \text{\tiny$\underbrace{K_3, \cdots, K_3}_j$}}}
\newcommand{\fertg}{f_{\alpha_s; \text{${K_{i_1}, \cdots, K_{i_t}}$}}}
\title{Multicolor Erd\H{o}s--Rogers Functions}
\author{
	Hong Liu\thanks{Extremal Combinatorics and Probability Group (ECOPRO), Institute for Basic Science (IBS), Daejeon, South Korea. Supported by the Institute for Basic Science (IBS-R029-C4). \texttt{hongliu@ibs.re.kr}. }
	\and
    Haoran Luo\thanks{Department of Mathematics, Statistics and Computer Science, University of Illinois Chicago, Chicago, Illinois, USA. Research was partially performed while the second author was at the University of Illinois Urbana-Champaign and supported in part by Dr. James J. Woeppel Fellowship. \texttt{haoranl8@uic.edu}. }
	\and
	Minghui Ouyang\thanks{School of Mathematical Sciences, Peking University, Beijing 100871, China. \texttt{ouyangminghui1998@gmail.com}. }
}
\date{}
\begin{document}
\maketitle
\begin{abstract}
	In this paper, we study a multicolor variant of Erd\H{o}s--Rogers functions. Let $f_{\alpha_s; K_{i_1}, \cdots, K_{i_t}}(n)$ be the largest integer $m$ such that there is always an induced $K_s$-free subgraph of size $m$ in every $n$-vertex graph with a $t$-edge-coloring in which the edges with the $j$-th color induce no copy of $K_{i_j}$. We establish both upper and lower bounds for this multicolor version. Specifically, we show that $f_{\alpha_5; K_3, K_3}(n) = n^{1/2+o(1)}$, $\Omega(n^{5/11}) \le f_{\alpha_5; K_3, K_3, K_3}(n) \le n^{1/2+o(1)}$, and $\Omega(n^{20/61}) \le f_{\alpha_5; K_3, K_3, K_3, K_3}(n) \le n^{1/3+o(1)}$. 
\end{abstract}

\section{Introduction} \label{sec:int}
The Ramsey number $R(a,b)$ is defined as the smallest integer $n$ such that every red-blue edge-coloring of $K_n$, the complete graph on $n$ vertices, contains a red $K_a$ or blue $K_b$. Determining its asymptotic growth has attracted significant attention in Extremal Combinatorics~\cite{Ram29, ES35, Spe77, AKS80, Kim95, FGM20, MV24, CJMS25+}. When $a$ is fixed and $b$ varies, $R(a,b)$ is referred to as the off-diagonal Ramsey number. In this regime, $R(a,b)$ grows polynomially with respect to $b$.

Erd\H{o}s and Rogers~\cite{ER62} in 1962 introduced the following natural generalization of the Ramsey number:
\begin{equation*}
    f_{s,t}(n) \ce \min \Bigl\{ \alpha_s(G) \cl G \text{ is a $K_t$-free graph with $n$ vertices} \Bigr\}.
\end{equation*}
Here, $\alpha_s(G)$ denotes the \emph{$s$-independence number} of a graph $G$, i.e., the maximum size of a vertex subset that induces no copy of $K_s$. Note that $f_{2,t}$ is precisely the inverse function of the Ramsey function: $f_{2,t}(n) \ge b \iff R(t,b) \le n$.

The function $f_{s,t}$ is now known as the \emph{Erd\H{o}s--Rogers function}, and a central problem is to determine its growth as $n \to \infty$ for various fixed values of $s$ and $t$. The case $t = s+1$ has been extensively studied and is now resolved up to a polylogarithmic factor through a series of works~\cite{ER62, DR11, Wol13, DRR14, MV25}. The growth of the next case, $t = s+2$, however, remains open~\cite{Sud05a, JS25}. For general values of $s$ and $t$, earlier results appear in~\cite{BH91, Kir94}. The best-known lower bound on $f_{s,t}(n)$ is given by a recursive formula due to Sudakov~\cite{Sud05a, Sud05b}, while the best-known upper bound was established by Krivelevich~\cite{Kri95}. We refer the readers to~\cite{JS25} and the citations therein for further discussion of the Erd\H{o}s--Rogers function. Recently, some generalized Erd\H{o}s--Rogers functions for arbitrary pairs of graphs have also been considered~\cite{BCL25, MV24b+, GJS25}.

In this paper, we investigate a multicolor version of the Erd\H{o}s--Rogers function. For integers $i_1,\cdots, i_t$, we say that a graph $G$ is \emph{$(K_{i_1},\cdots, K_{i_t})$-free} if there is a coloring of $E(G)$ with $t$ colors such that the edges with the $j$-th color induce no copy of $K_{i_j}$ for every $1\le j \le t$.
The \emph{multicolor Ramsey number} $R(i_1,\cdots,i_t)$ is the smallest integer $n$ such that $K_n$ is not $(K_{i_1},\cdots, K_{i_t})$-free.
We refer the readers to~\cite{HW20, CF21, Wig21, Saw22, BBCGHMST24+} for some known bounds on the multicolor Ramsey numbers. We write $r_{t} (b)$ for $R(\underbrace{b,b,\cdots,b}_t)$.

In the same spirit as the multicolor Ramsey number generalizes the usual Ramsey number, we define the following:
\begin{definition*}[Multicolor Erd\H{o}s--Rogers function]
	Let $s$, $t$, and $i_1,\cdots,i_t$ be positive integers. Define
	   \[ \fertg (n) \ce \min \Bigl\{ \alpha_s(G) \cl G \text{ is a $(K_{i_1},\cdots, K_{i_t})$-free graph with $n$ vertices} \Bigr\}. \]
\end{definition*}

When $s=2$, the multicolor Erd\H{o}s--Rogers function can be viewed as the inverse of the multicolor Ramsey number. A notable result in this direction is a theorem of Alon and R\"{o}dl~\cite{AR05}, who proved that\footnote{Here $\widetilde{\Theta}$ means $\Theta$ up to a polylogarithmic factor. } $R(\underbrace{3,\cdots,3}_t, m) = \widetilde{\Theta}(m^{t+1})$ or equivalently $f_{\alpha_2; \text{\tiny$\underbrace{K_3, \cdots, K_3}_t$}} (n)= \widetilde{\Theta}(n^{\frac{1}{t+1}})$. This result was extended by He and Wigderson~\cite{HW20}. 

We remark on the following notable difference between the problems of the multicolor Ramsey numbers and the multicolor Erd\H{o}s--Rogers function. Take the example of $R(3,3,m)$, or equivalently $f_{\alpha_2,K_3,K_3}(n)$. The idea in~\cite{AR05, HW20} is to start with a triangle-free graph with moderately number of edges, and then put together two copies of this triangle-free graph randomly. However, if we now consider $f_{\alpha_3,K_3,K_3}(n)$, then by definition, each color class contains no triangle at all and hence every subset of vertices is trivially triangle-free. Therefore, in order to fully conquer this problem, it is not enough to consider each color class separately and we are forced to consider the triangles formed by edges from both color classes. This fact leads to some additional nuance in the analysis of the multicolor Erd\H{o}s--Rogers function.

Our first result is a construction that provides the following upper bounds.

\begin{theorem} \label{thm:upper_bound}
	For integers $s \ge 2$, $t \ge 1$, and $b \ge 3$, we have
	\[ f_{\alpha_s; \text{\tiny$\underbrace{K_b, \cdots, K_b}_t$}} (n) \le n^{1/\left( \lfloor \frac{t}{\ell} \rfloor + 1 \right) + o(1)}, \]
	where $\ell$ is the minimum integer such that $r_\ell(b) > s$.
\end{theorem}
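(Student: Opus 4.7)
The plan is to exhibit, for each sufficiently large $n$, an $n$-vertex graph $G$ together with a $t$-edge-coloring such that every color class is $K_b$-free and $\alpha_s(G) \le n^{1/k+o(1)}$, where $k \ce \lfloor t/\ell \rfloor + 1$. The only place the hypothesis $r_\ell(b) > s$ is used is to fix, once and for all, an $\ell$-edge-coloring $\chi^*$ of $K_s$ with no monochromatic $K_b$ (which exists by the definition of $\ell$). This $\chi^*$ is the sole combinatorial gadget and is to be copied across $k-1$ ``layers'' of the construction, with one extra layer handling the residual colors.

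Concretely, I would set $N \ce n^{1/k}$, take $V(G) \ce [N]^k$, and partition the $t$ colors into $k-1$ blocks of size $\ell$ (one per layer $i \in [k-1]$) plus a residual block of size $r \ce t - (k-1)\ell \in \{0, 1, \dots, \ell - 1\}$. Fix balanced projections $\phi_i \cl [N] \to [s]$. For each pair $u \ne v$ in $V(G)$, let $i(u,v)$ be the largest coordinate at which $u$ and $v$ differ. If $i(u,v) < k$, place $uv$ in $G$ iff $\phi_i(u_i) \ne \phi_i(v_i)$ and color it by $\chi^*(\phi_i(u_i), \phi_i(v_i))$ in the $i$-th block. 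If $i(u,v) = k$, use a small auxiliary $r$-edge-colored graph $H$ on $[N]$ (edgeless if $r = 0$, and a direct construction otherwise, since $r < \ell$) to decide the edge and its color. The verification of $K_b$-freeness is then layer-by-layer: blocks have disjoint palettes, so any monochromatic $K_b$ lives in a single block, and for the $i$-th interior block it would force $b$ vertices whose $i$-th coordinates project under $\phi_i$ to a $K_b$ in $K_s$ with a common $\chi^*$-color---contradicting the choice of $\chi^*$. The bound on $\alpha_s(G)$ is obtained from an inductive product-type inequality of the form $\alpha_s(G) \le \alpha_s(H) \cdot \alpha_s(G_{k-1})$: iterated pigeonholing over coordinates shows that a $K_s$-free subset $S \subseteq V(G)$ must have each vertical fiber $K_s$-free in $H$ and its trace on the first $k-1$ coordinates $K_s$-free in the analogous $(k-1)$-layer construction $G_{k-1}$, to which induction applies.

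The main obstacle, explicitly flagged in the discussion preceding the theorem, is precisely that a monochromatic $K_b$ (or a problematic $K_s$ in the independence analysis) can use edges from several layers simultaneously, so a naive layer-by-layer count is insufficient. Although cross-block monochromatic $K_b$'s are ruled out at once by the disjointness of palettes, within the $\alpha_s$ analysis we must carefully track ``mixed'' $K_s$ structures that span several layers. The coordinate rule is designed so that every such mixed clique projects, coordinate by coordinate, either to a clique in $\chi^*$'s coloring of $K_s$ (yielding a forbidden monochromatic $K_b$) or to a clique in the smaller-dimensional construction (handled inductively). Making this projection rigorous and verifying that no slack accumulates in the exponent across the $k$ layers is the technical heart of the proof.
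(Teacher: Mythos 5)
There is a fatal flaw in the construction: it contains a $K_s$-free subset of linear size, so it cannot give any nontrivial upper bound, let alone $n^{1/k+o(1)}$. Take $S \ce \{u \in [N]^k : \phi_i(u_i)=1 \text{ for every } i<k\}$, which has size roughly $(N/s)^{k-1}\cdot N = \Theta(n/s^{k-1})$. For $u,v\in S$ with $i(u,v)<k$ you have $\phi_{i}(u_{i})=\phi_{i}(v_{i})=1$, so by your edge rule $uv\notin E(G)$; hence every edge of $G[S]$ is of type $i(u,v)=k$, and $G[S]$ is a blow-up of the auxiliary graph $H$, with blow-up classes $\{u\in S: u_k=x\}$ of size $(N/s)^{k-1}$. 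But $H$ carries only $r=t-(k-1)\ell<\ell$ colors, each $K_b$-free, and since $\ell$ is minimal with $r_\ell(b)>s$ we have $r_r(b)\le s$, so every such $H$ is automatically $K_s$-free. A blow-up of a $K_s$-free graph is $K_s$-free (a clique meets each blow-up class at most once), so $S$ is a $K_s$-free set of size $\Omega(n)$. In particular, the proposed product inequality $\alpha_s(G)\le\alpha_s(H)\cdot\alpha_s(G_{k-1})$ is false here: your graph is not a lexicographic product, and the non-edges created by the condition $\phi_i(u_i)\ne\phi_i(v_i)$ produce large $K_s$-free sets that are not confined to any single fiber, which is exactly what breaks the intended pigeonhole-plus-induction step. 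Trying to repair this by also putting the $\phi_i(u_i)=\phi_i(v_i)$ pairs into the graph just re-creates the original difficulty, since those edges have no natural color under $\chi^*$ and a monochromatic $K_b$ can then cross layers.

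This is not a repairable technicality but a wrong approach: a deterministic coordinate/product construction does not give control on $\alpha_s$ at this strength. The paper's proof is of a completely different character. It starts from the Erd\H{o}s--Frankl--R\"odl triangle-free linear hypergraph, passes to its edge--vertex incidence graph, replaces each clique $K_v$ by a random complete $s$-partite graph (\Cref{lem:good_partition}), blows up by a factor $n^{\lfloor t/\ell\rfloor-1}$, and then uses the hypergraph container lemma (\Cref{lem:BMS_container} via the uniform-density notion of \Cref{def:uniformly_dense}) to show that the number of $K_s$-free sets of size $n^{1+o(1)}$ is only $(10k)^{n^{1+o(1)}}$. Finally it randomly overlays $\lfloor t/\ell\rfloor$ independent copies and randomly deletes vertices to kill all such sets, in the spirit of Alon--R\"odl; the $(K_b,\dots,K_b)$-freeness follows because every $K_b$ of the base graph lives inside a single clique $K_v$, whose $s$-partite structure can be colored by $\chi^*$. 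The container count together with random overlaying is the engine of the exponent; the coloring gadget $\chi^*$ you identified plays the same role in both arguments, but the rest of your plan needs to be replaced wholesale.
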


Heuristically, the exponent of $n$ is determined by $\lfloor \frac{t}{\ell} \rfloor$, the maximum number of disjoint color classes of size $\ell$ such that the $\ell$-color Ramsey number of $K_b$ exceeds $s$. Take the simplest case, for example when $\lfloor \frac{t}{\ell} \rfloor=0$, then $t<\ell$ and $r_t(b)\le s$, and any $(K_b,\cdots,K_b)$-free graph must be $K_s$-free, yielding a trivial bound $n$. It is worth noting that at the transition point, namely when $\lfloor \frac{t}{\ell} \rfloor$ increases from $0$ to $1$, the bound drops from $n$ to $n^{1/2+o(1)}$. If we take $(s,b,\ell)_{\ref{thm:upper_bound}}=(2,3,1)$ in \cref{thm:upper_bound}, then we recover the aforementioned result of Alon and Rödl~\cite{AR05}.

The upper bound in~\cref{thm:upper_bound} is optimal for an infinite class of parameters. Together with \Cref{thm:lower_bound} below, we have the following.

\begin{corollary} \label{cor:examples}
	Let $t\in\mathbb{N}$ and $s$ be such that $r_{t-1}(3) \le s < r_t(3)$. Then we have
	$$\fert (n) =n^{1/2+o(1)}.$$
	Furthermore, we have
	\begin{gather*}
		f_{\alpha_3; K_3, K_3}(n) \,=\, f_{\alpha_4; K_3, K_3}(n) \,=\, f_{\alpha_5; K_3, K_3}(n) \,=\, n^{1/2+o(1)}, \\
		\Omega(n^{5/11}) \,\le\, f_{\alpha_5; K_3, K_3, K_3}(n) \,\le\, n^{1/2+o(1)}, \\
		\Omega(n^{20/61}) \,\le\, f_{\alpha_5; K_3, K_3, K_3, K_3}(n) \,\le\, n^{1/3+o(1)}.
	\end{gather*}
\end{corollary}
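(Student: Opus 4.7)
The corollary is obtained by combining \cref{thm:upper_bound} with the lower bound \cref{thm:lower_bound} stated below. My plan is to verify that the correct parameters substitute into these two theorems and that the resulting exponents match the claimed ones; there is no genuinely new content beyond the two main theorems, only some parameter bookkeeping.

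For the upper bounds I apply \cref{thm:upper_bound} with $b=3$ throughout. The classical values I need are $r_1(3)=3$ and $r_2(3)=R(3,3)=6$; together with strict monotonicity of $r_\ell(3)$ in $\ell$, these imply that the critical index $\ell$ (the minimum integer with $r_\ell(3)>s$) equals $2$ whenever $s\in\{3,4,5\}$. For the general case $r_{t-1}(3)\le s<r_t(3)$, the two hypotheses $r_{t-1}(3)\le s$ and $r_t(3)>s$ together force $\ell=t$, so $\lfloor t/\ell\rfloor+1=2$ and \cref{thm:upper_bound} gives $n^{1/2+o(1)}$. For the three two-color instances with $s\in\{3,4,5\}$ the computation yields $\lfloor 2/2\rfloor+1=2$; for $(s,t)=(5,3)$ we obtain $\lfloor 3/2\rfloor+1=2$; and for $(s,t)=(5,4)$ we obtain $\lfloor 4/2\rfloor+1=3$. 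These exactly reproduce the three upper bounds listed.

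For the lower bounds I invoke \cref{thm:lower_bound} with the same parameter choices. In the regime $r_{t-1}(3)\le s<r_t(3)$ the lower bound $n^{1/2+o(1)}$ should drop out directly, which simultaneously handles the three two-color cases $(s,t)\in\{(3,2),(4,2),(5,2)\}$. For the remaining two cases, substituting $(s,t)=(5,3)$ and $(s,t)=(5,4)$ into \cref{thm:lower_bound} and optimizing the resulting expression over the auxiliary parameters should yield the specific exponents $5/11$ and $20/61$.

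The hard part is not the corollary itself but \cref{thm:lower_bound}, which must be sharp enough to recover the tight exponent $1/2$ whenever $\ell=t$ and to produce the particular rationals $5/11$ and $20/61$ in the two outlier cases. Once that theorem is in hand, the derivation above is a purely mechanical parameter substitution combined with the small Ramsey numbers $r_1(3)=3$ and $r_2(3)=6$.
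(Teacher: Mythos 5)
Your proposal is correct and follows essentially the same route as the paper: the upper bounds come from plugging $b=3$ and the relevant $\ell$ (namely $\ell=t$ in the general case and $\ell=2$ when $s\in\{3,4,5\}$, using $r_1(3)=3$ and $r_2(3)=6$) into \cref{thm:upper_bound}, while the lower bounds follow from \cref{thm:lower_bound} (specifically \cref{lem:lay2} for $t=2$, \cref{lem:lay3} for $(s,t)=(5,3)$ giving $a_3=5/11$, and one application of the recursion in \cref{pro:rec_formula} with $g(2)=1$, $g(3)=g(4)=g(5)=2$ giving $a_4=20/61$). The only imprecision is your phrase about ``optimizing the resulting expression over the auxiliary parameters'' for the exponents $5/11$ and $20/61$ --- there is no optimization involved, just direct substitution of $s=5$ into the explicit initial value formula and then one step of the recursion.
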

Note that we have a fairly good understanding of the 2-color case $f_{\alpha_s;K_3,K_3}$. Indeed, $f_{\alpha_2; K_3, K_3}(n)=n^{1/3+o(1)}$, $f_{\alpha_s; K_3, K_3}(n)=n^{1/2+o(1)}$ for $3\le s\le 5$, and $f_{\alpha_s; K_3, K_3}(n)=n$ for all $s\ge 6$.

Motivated by the recursive lower bound of Sudakov~\cite{Sud05a, Sud05b}, we also prove the following lower bounds. To state our recursive formula, we need the notion of the \emph{local Ramsey number}~\cite{GLST87}, a variant of the classical Ramsey number that arises in the study of edge-colorings with local constraints. Let $H$ be a fixed graph and $k$ be a positive integer. The local Ramsey number $r^{\text{loc}}_k(H)$ is defined as the smallest integer $n$ such that the following holds: in every edge-coloring of the complete graph $K_n$ using colors from $\N$ such that each vertex is incident to edges of at most $k$ distinct colors, there exists a monochromatic copy of $H$. We use the shorthand $r^{\text{loc}}_k(3)$ for the local Ramsey number when $H = K_3$. Note that $r_k^{\text{loc}}(3) \ge r_k(3)$. Let $g \cl \N \to \N$ be the inverse function of the local Ramsey number, that is, $g(i)$ is the smallest integer $k$ such that $r^{\text{loc}}_k(3) > i$. Since it can be checked that $r^{\text{loc}}_0(3)=2$, $r^{\text{loc}}_1(3)=3$, and $r^{\text{loc}}_2(3)=6$, we have the following small values of $g$: $g(2)=1$, $g(3)=g(4)=g(5)=2$, $g(6)=3$.

\begin{theorem} \label{thm:lower_bound}
	For every integer $s \ge 2$, we have
	   \[ \fert (n) \ge \Omega(n^{a_t}), \]
	where the sequence $\{a_t\}$ is defined recursively by
	\begin{equation*}
		a_t =
		\left\{
		\begin{array}{ll}
			1                                                                              & \textrm{if $r_t(3) \le s$},                                \\
			\frac{1}{2}                                                                    & \textrm{if $r_{t-1}(3) \le s < r_t(3)$},                   \\
			\frac{s + \lceil \frac{s}{2} \rceil - 3}{2s + 2 \lceil \frac{s}{2} \rceil - 5} & \textrm{if $r_{t-2}(3) \le s < r_{t-1}(3)$ and $s \ge 3$},
		\end{array}
		\right.
	\end{equation*}
	and
	   \[ \frac{1}{a_t} = 1 + \frac{1}{s-1} \sum_{i = 2}^{s} \frac{1}{a_{t-g(i)}} \quad \textrm{for $3 \le s < r_{t-2}(3)$ or $2 = s < r_{t-1}(3)$.} \]
\end{theorem}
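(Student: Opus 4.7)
My plan is to split the argument into four cases matching the recursive definition of $a_t$, handling the easy cases via a neighborhood-versus-degree dichotomy, case three via a two-level descent, and the recursive case by strong induction on $t$ using the local Ramsey number.

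The two base regimes are handled as follows. When $r_t(3) \le s$, the statement is immediate: any $K_s$ inside a $(K_3,\ldots,K_3)$-free $G$ would inherit a $t$-coloring, which by the Ramsey hypothesis must contain a monochromatic triangle, a contradiction; hence $\alpha_s(G)=n$. When $r_{t-1}(3)\le s<r_t(3)$ I would use the standard trick: fix a vertex $v$ of maximum degree $\Delta$ and let $N_j(v)$ be the largest color class of edges at $v$, so $|N_j(v)|\ge \Delta/t$. Inside $N_j(v)$ no edge carries color $j$, otherwise $v$ would close a monochromatic triangle; thus $G[N_j(v)]$ is $(K_3,\ldots,K_3)$-free in only $t-1$ colors and, since $r_{t-1}(3)\le s$, it is $K_s$-free, giving an $s$-independent set of size $\ge \Delta/t$. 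Balancing against a greedy independent set of size $\ge n/(\Delta+1)$ then yields $\alpha_s(G)=\Omega(\sqrt n)$.

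The case $r_{t-2}(3)\le s<r_{t-1}(3)$, $s\ge 3$, requires a sharper two-level descent that squeezes out the bonus exponent $(s+\lceil s/2\rceil-3)/(2s+2\lceil s/2\rceil-5)$. Starting with $v$ of high degree, the first-level class $N_j(v)$ is $(K_3,\ldots,K_3)$-free in $t-1$ colors but might still carry a $K_s$; we then pick $u\in N_j(v)$ of large in-$N_j(v)$ degree and descend to a common color-neighborhood of $\{v,u\}$ that is $(K_3,\ldots,K_3)$-free in $t-2$ colors and hence $K_s$-free. The constant $\lceil s/2\rceil$ enters because a $K_s$ whose edges are $2$-colored necessarily contains a monochromatic clique of size $\ge\lceil s/2\rceil$, which constrains how the two nested neighborhoods may be combined; balancing the greedy bound, the first-level size, and the second-level size under this Ramsey constraint gives the claimed exponent.

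The recursive case is then proved by strong induction on $t$. The plan is to construct the $s$-independent set sequentially: pick $v_1,v_2,\ldots,v_{s-1}$ step by step, and at each step replace the ambient graph by an appropriate common color-neighborhood, so that the clique-structure accumulated stays monochromatic-triangle-free and the ambient graph remains $(K_3,\ldots,K_3)$-free in progressively fewer colors. The key new ingredient compared to the classical Erd\H{o}s--Rogers recursion is the use of the local Ramsey number: inside the growing clique $\{v_1,\ldots,v_i\}$, which is monochromatic-triangle-free under the induced edge-coloring, each vertex meets its siblings in only a bounded number of distinct colors, and the definition of $r^{\text{loc}}_k(3)$ forces at least $g(i)$ colors to disappear from the edges available at step $i$. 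Consequently, the residual graph $G_i$ at step $i$ is $(K_3,\ldots,K_3)$-free in $t-g(i)$ colors, and induction supplies $\alpha_s(G_i)\ge|G_i|^{a_{t-g(i)}}$; balancing the initial greedy bound against the $s-1$ descent levels produces the averaged recursion $1/a_t=1+\tfrac{1}{s-1}\sum_{i=2}^{s}1/a_{t-g(i)}$.

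I expect the hardest part to be the combinatorial bookkeeping in the recursive case: rigorously translating the observation ``a vertex sees few colors inside the selected clique'' into the local Ramsey threshold $r^{\text{loc}}_k(3)$, and then tracking precisely how many colors survive after each descent. A secondary challenge is case three, where the $\lceil s/2\rceil$ factor interacting with the two-color Ramsey constraint $r_2(3)=6$ demands a delicate choice of how to split the color budget across the two nested neighborhoods, and a looser accounting collapses the exponent to the weaker $n^{1/3}$ obtainable from a bare two-level recursion.
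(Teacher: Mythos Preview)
Your first two cases are sound; the second differs from the paper (which derives $a_t=\tfrac12$ directly from the recursion using $a_{t-g(i)}=1$), but your max-degree/greedy balance works.

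There is a real gap in the third case. The claim that ``a $K_s$ whose edges are $2$-colored necessarily contains a monochromatic clique of size $\ge\lceil s/2\rceil$'' is false: for $s=5$ the two-$C_5$ decomposition of $K_5$ has no monochromatic $K_3$, yet $\lceil 5/2\rceil=3$; for larger $s$ in the relevant range the discrepancy grows. The paper's $\lceil s/2\rceil$ has an entirely different origin. It proves (by induction on $k$) that inside any monochromatic-triangle-free $K_k$ one can find an ordered set $v_1,\ldots,v_{\lceil k/2\rceil}$ with $v_1v_2$ \emph{not} of the most frequent color at $v_1$ and each later $v_i$ seeing at least two colors back. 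This yields a pair of set systems---an $s$-uniform $\mathcal F$ and a $\lceil s/2\rceil$-uniform $\mathcal G$---such that every $K_s$ either lies in $\mathcal F$ or contains a member of $\mathcal G$; one then applies a two-hypergraph Tur\'an/alteration bound, after first disposing of the case where $\mathbb E_x[\dst(x)\dnd(x)]$ is large by finding a bichromatic common neighborhood. Your proposed two-level descent, without this structural claim, collapses to the bare recursion and gives a strictly weaker exponent (for $s=5$ it yields $4/9$ rather than $5/11$).

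For the recursive case your ingredients are correct but the mechanism is garbled. You write that ``each vertex meets its siblings in only a bounded number of distinct colors''; the point is the opposite. The paper proves an ordering lemma: in any triangle-free-colored $K_s$ the vertices can be ordered so that $v_i$ sees \emph{at least} $g(i)$ distinct colors among $\{v_1v_i,\ldots,v_{i-1}v_i\}$ (shown by an exchange/minimization argument on inversions of the color-count sequence). This ordering is what lets you bound the number of $K_s$'s by $n\cdot\prod_{i=2}^s n^{\alpha/a_{t-g(i)}}$ and then apply the hypergraph Tur\'an bound; the dichotomy is ``some common $g(i)$-colored neighborhood is large (induct)'' versus ``all are small (count $K_s$'s and delete)''. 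Your sequential-descent description does not supply this ordering, and without it the claim that $g(i)$ colors disappear at step $i$ is unjustified.
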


\medskip

\noindent\textbf{Notation and Organization.}
Throughout the paper, we assume that $s \ge 2$ is a fixed integer and $n$ is sufficiently large with respect to $s$. All logarithms are taken to base~$2$. In \Cref{sec:blk_constr}, we present the block construction and use it to prove \Cref{thm:upper_bound} in \Cref{sec:upper_bound}. \Cref{thm:lower_bound} is established in \Cref{sec:lower_bound}. We conclude with some remarks in \Cref{sec:con}.

\section{The Block Construction} \label{sec:blk_constr}
Our approach is inspired by recent developments on the Erd\H{o}s--Rogers function~\cite{DRR14, MV24, JS25, MV24b+, MV25}. We apply the Hypergraph Container Lemma~\cite{ST15, BMS15} to bound the number of large $K_s$-free subsets. Taking advantage of the multicolor setting, we also use an idea from Alon and R\"{o}dl~\cite{AR05} in their study of multicolor Ramsey numbers, where they randomly overlaid multiple copies of a basic construction to obtain the final one. 

As remarked in \cref{sec:int}, it is not enough to simply take some triangle-free graph, since that would give a trivial upper bound. 
A building block in our construction is a triangle-free hypergraph introduced by Erd\H{o}s, Frankl, and R\"{o}dl~\cite{EFR86}, which was later adapted by Mubayi and Verstra\"ete in their study of the Erd\H{o}s--Rogers function for arbitrary pairs of graphs~\cite{MV24b+}.

\begin{proposition}[{\cite[Theorem 4]{MV24b+}}] \label{prop:EFR_constr}
	For any integers $R, n \ge 3$ with $n \ge R \ge \log n$, there exists an $n$-vertex $R$-uniform hypergraph $\mH$ with the following properties:
	\begin{enumerate}[label=(\roman*), align=left, labelsep=0.8em, leftmargin=2.5em, rightmargin=2em, itemsep=-0.3ex, topsep=0.3ex]
		\item $|E(\mH)| \geq n^2/R^{8\sqrt{\log_R n}}$.
		\item $\mH$ is linear, that is, for any distinct edges $e,f \in \mH$, $|e \cap f| \leq 1$.
		\item $\mH$ is triangle-free, that is, for any three distinct edges $e,f,g \in \mH$, if $|e \cap f| = |f \cap g| = |g \cap e| = 1$ then $|e \cap f \cap g| = 1$.
	\end{enumerate}
\end{proposition}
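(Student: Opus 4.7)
The plan is to use the Behrend-type algebraic construction of Erd\H{o}s--Frankl--R\"odl~\cite{EFR86}, recast in exactly this form by Mubayi and Verstra\"ete~\cite{MV24b+}. Take the vertex set to be $\mathbb{Z}_p$ for a prime $p$ with $n \le p \le 2n$ (so wrap-around is harmless since $p \gg R$), fix a 3-AP-free Behrend set $B \subseteq \mathbb{Z}_p$ of near-maximal density $2^{-c\sqrt{\log p}}$, and declare the edges to be a carefully chosen family of length-$R$ arithmetic progressions $\{a, a+b, \ldots, a+(R-1)b\}$ with common differences $b \in B$.

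Linearity and triangle-freeness then both reduce to the 3-AP-free property of $B$. If two edges with different common differences $b \ne b'$ shared two vertices, subtracting the defining equations would give a relation $(k-i)b = (l-j)b'$ with small integer coefficients $|k-i|, |l-j| < R$; choosing $p$ prime and much larger than $R$, together with a mild restriction of which APs are included (e.g.\ taking, for each $b$, only APs starting in a specific coset system), rules out such coincidences, while within a single $b$ the chosen APs are disjoint by construction. For triangle-freeness, suppose three edges $e_1, e_2, e_3$ have pairwise intersections $v_{12}, v_{23}, v_{13}$ but no common vertex. Writing each $v_{ij}$ as an entry of the corresponding AP and combining the three resulting equations yields a linear identity whose only nontrivial form exhibits the three common differences (weighted by small integers arising from the AP indices) as a three-term arithmetic progression in $B$, contradicting the Behrend property.

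For the density, the construction produces $\Omega(p/R)$ edges per $b \in B$, so
\[ |E(\mH)| \;\geq\; \Omega\!\left( \frac{p \cdot |B|}{R} \right) \;\geq\; \frac{p^2}{R \cdot 2^{c\sqrt{\log p}}}. \]
The hypothesis $R \ge \log n$ gives $\log R \ge \log \log n$, and a direct computation $\log R + c\sqrt{\log n} \le 8\sqrt{\log n \log R}$ (for $n$ sufficiently large) converts the right-hand side to $n^2/R^{8\sqrt{\log_R n}}$, which is the target in (i). The main obstacle is really just the bookkeeping: verifying that the restriction needed to force property (ii) keeps a constant fraction of the edges, and calibrating Behrend's constant $c$ and the small-integer multipliers appearing in the linearity and triangle-freeness arguments against the specific exponent $8$ that is baked into the statement.
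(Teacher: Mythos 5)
The paper imports this statement verbatim from Mubayi--Verstra\"ete~\cite{MV24b+} and does not reprove it, so there is no internal argument to compare against; what can be assessed is whether your reconstruction of the cited proof is sound. It is not quite: the claim that both linearity and hypergraph triangle-freeness ``reduce to the 3-AP-free property of $B$'' is the crux, and it is false as stated. Chaining the three pairwise-intersection identities of a hypothetical triangle $e_1,e_2,e_3$, with common differences $b_1,b_2,b_3$ and shared vertices at AP-positions $i,j,k$, produces
\[
(k-i)\,b_1 + (i-j)\,b_2 + (j-k)\,b_3 = 0,
\]
an affine relation whose coefficients sum to zero but are otherwise arbitrary integers of magnitude below $R$. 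A 3-AP is only the special case $(1,-2,1)$; a set that is merely 3-AP-free can easily satisfy relations such as $3b_1-5b_2+2b_3=0$. What actually rescues the construction is the geometry of Behrend's set: it is a sphere slice in $\mathbb{Z}^d$, so any three of its points obeying an affine relation whose coefficients sum to zero are collinear, and a line meets a sphere in at most two points. You must invoke this stronger property, and also check that the base-expansion map to $\mathbb{Z}$ is a Freiman isomorphism of order $\Theta(R)$ so that the relation lifts back to the lattice --- this is precisely where the ``calibration'' you defer at the end actually bites, since enlarging the base to accommodate coefficients up to $R$ costs a factor in the density.

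The linearity step has the same defect. The relation $(k-i)b=(l-j)b'$ is not excluded by primality of $p$ nor by 3-AP-freeness of $B$ --- take $b=2$, $b'=4$, $(k-i,l-j)=(2,1)$. The ``coset system'' is in fact the entire mechanism and must be spelled out: the construction is $R$-layered, each edge $\{a,a+b,\ldots,a+(R-1)b\}$ placing its $i$th element in layer $i$, so two vertices shared by two edges occupy the same pair of layers, forcing $k-i=l-j$; the relation then collapses to $(k-i)(b-b')=0$, giving $b=b'$ once $p$ is prime and exceeds $R$. It is also exactly this layering that forces the intersection positions of a hypergraph triangle to be well-defined integers $i,j,k$ in the first place, which is why the coefficients above sum to zero. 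With the layering and the sphere argument made explicit, your density bookkeeping is correct in outline.
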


We apply \Cref{prop:EFR_constr} with $R = C \log^2 n$, where $C = 32s(s+1)^2$. Then, the condition~(i) yields
    \[ |E(\mH)| \geq \frac{n^2}{R^{8\sqrt{\log_R n}}} \ge \frac{n^2}{(\log n)^{32\sqrt{\log n}}} \quad \text{for } n \ge C(s), \]
where $C(s)$ is a sufficiently large constant depending only on $s$. For simplicity, we omit rounding and assume that $R$ is an integer.

We define the edge-vertex incidence graph $G$ of $\mH$ by setting $V(G) \ce E(\mH)$ and $E(G) \ce \{ \{e,f\} \cl  e, f \in E(\mH) ,\, e \ne f,\, e \cap f \ne \varnothing\}$. For each vertex $v \in V(\mH)$, the set of edges containing $v$ forms a clique $K_v = \{e \in E(\mH)\cl v \in e\}$ in $G$. Due to the linearity of $\mH$, any two such cliques intersect in at most one vertex (of $G$). Hence, $E(G)$ is the edge-disjoint union of $K_v$ for all $v \in V(\mH)$. Also note that by \cref{prop:EFR_constr} (iii), every triangle in $G$ lies entirely within some clique $K_v$ for $v \in V(\mH)$ and so is any larger clique.

We let $G_*$ denote an \emph{$s$-partite sparsification of $G$}, that is, a graph obtained from $G$ by replacing each clique $K_v$ with a complete $s$-partite graph. We say that a subset $X \subseteq V(G)$ is \emph{evenly partitioned} in a clique $K_v$ if its intersection with each partite class in the $s$-partition of $K_v$ has size at least $\frac{|X \cap K_v|}{s+1}$. For any subset $X \subseteq V(G)$, define $a_v \ce |X \cap K_v|$ for each vertex $v \in V(\mH)$, and let $I_i \ce \{ v \in V(\mH) \cl 2^{i-1} \le a_v < 2^i \}$. Let $\ell \in \{1,\cdots, \log n\}$ be the index maximizing $\sum_{v \in I_\ell} a_v$.
The following lemma ensures the existence of an $s$-partite sparsification of $G$ in which every subset $X \subseteq V(G)$ with $|X| \ge n$ is evenly partitioned in at least half of the cliques that contribute significantly to it.

\begin{lemma} \label{lem:good_partition}
	There exists an $s$-partite sparsification $G_*$ of $G$ such that for every subset $X \subseteq V(G_*) = V(G)$ with $|X| \ge n$, the set $X$ is evenly partitioned in at least half of the cliques $K_v$ with indices $v \in I_\ell$.
\end{lemma}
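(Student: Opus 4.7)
The plan is to use the probabilistic method. I would construct $G_*$ at random by choosing, independently for each $v \in V(\mH)$, a uniformly random function $\pi_v \cl K_v \to [s]$, and then replacing $K_v$ by the complete $s$-partite graph whose parts are $P_v^i \ce \pi_v^{-1}(i)$. The goal is to show that with positive probability a single such random $G_*$ satisfies the required property simultaneously for every $X \subseteq V(G)$ with $|X| \ge n$, which yields existence.

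The first step is a per-clique Chernoff bound. For fixed $X$ and $v$, with $a_v = |X \cap K_v|$, the quantity $|X \cap P_v^i|$ is $\mathrm{Bin}(a_v, 1/s)$, and the threshold $a_v/(s+1) = (1 - 1/(s+1)) \cdot a_v/s$ is a $\delta = 1/(s+1)$ deviation below its mean. Standard Chernoff and a union bound over the $s$ parts give
\[ \Pr[X \cap K_v \text{ is not evenly partitioned}] \le s \exp(-a_v/C'), \qquad C' \ce 2s(s+1)^2. \]
Crucially, these events are mutually independent across distinct $v$ because the $\pi_v$ are chosen independently.

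Next, I would lower-bound $2^\ell$ in terms of $|X|$. Double counting yields $\sum_v a_v = R|X|$, so by pigeonhole the max-weight bucket satisfies $\sum_{v \in I_\ell} a_v \ge R|X|/\log n$; combined with $|I_\ell| \le n$ and $a_v < 2^\ell$ on $I_\ell$, this forces $2^\ell > R|X|/(n\log n) \ge C\log n$ whenever $|X| \ge n$. Thus every $v \in I_\ell$ has $a_v \ge C(\log n)/2$, so that $p_\ell \ce s \exp(-2^{\ell-1}/C')$ is polynomially small in $n$. By independence and the usual binomial tail bound,
\[ \Pr\!\l[\sum_{v \in I_\ell} B_v > |I_\ell|/2\r] \le \binom{|I_\ell|}{\lceil |I_\ell|/2 \rceil} p_\ell^{|I_\ell|/2} \le (4p_\ell)^{|I_\ell|/2}, \]
where $B_v$ is the indicator that $X$ is not evenly partitioned in $K_v$. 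Plugging in $p_\ell$ and using $|I_\ell| \cdot 2^\ell \ge R|X|/\log n = C|X|\log n$ gives $\Pr[\text{$X$ is bad}] \le n^{-3|X|}$ for $n$ large in terms of $s$. A union bound over $X$ with $|X| = m \ge n$, using $|V(G)| = |E(\mH)| \le n^2$ from the linearity of $\mH$, then bounds the total failure probability by $\sum_{m \ge n} \binom{n^2}{m} n^{-3m} = o(1)$, so a good $G_*$ exists.

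The main obstacle will be making the constants line up cleanly. The Chernoff exponent is $2^{\ell-1}/C'$, whereas the combined price of the binomial coefficient $\binom{|I_\ell|}{|I_\ell|/2}$ (an entropy term of size $|I_\ell| \log 2$) and the union bound over subsets of $V(G)$ demands an exponent of order $\Omega(|X|\log n)$. The choice $R = C\log^2 n$ with $C = 32s(s+1)^2$ is exactly what forces $2^\ell \ge C\log n$ under $|X| \ge n$ via the pigeonhole step, and the ratio $C/C' = 16$ is exactly what is needed so that the Chernoff contribution $|I_\ell| \cdot 2^{\ell-1}/(2C')$ dominates the entropy term $|I_\ell| \ln(4s)/2$ by a margin wide enough to close the union bound. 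I expect the bulk of the write-up effort to lie in this bookkeeping rather than in any conceptual difficulty.
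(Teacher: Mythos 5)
Your proposal is correct and follows the same route as the paper's own proof: random per-clique $s$-partitions, a Chernoff bound with constant $2s(s+1)^2$, independence across cliques yielding the binomial-type bound $\binom{|I_\ell|}{|I_\ell|/2}p_\ell^{|I_\ell|/2}$, the double-counting/pigeonhole estimate on $|I_\ell| \cdot 2^\ell$ to beat the entropy term, and a final union bound over all $X$ of size at least $n$ using $|V(G)| \le n^2$. The only differences are cosmetic bookkeeping (you isolate the fact $2^\ell \ge C\log n$ and package the binomial coefficient as $(4p_\ell)^{|I_\ell|/2}$, while the paper keeps everything in one inequality chain), so this is the same argument.
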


\begin{proof}
	We construct $G_*$ by sparsifying each clique $K_v$ independently: for each $v$, we assign a random function $f_v \cl K_v \to [s]$, and retain only those edges $\{x, y\} \subseteq K_v$ with $f_v(x) \ne f_v(y)$.  By applying a Chernoff-type bound for Bernoulli trials and taking a union bound over the $s$ parts, we obtain
	   \[ \P \Bigl(X \text{ is not evenly partitioned in } K_v \Bigr) \le s \cdot \exp \left( -\frac{|X \cap K_v|}{2s(s+1)^2} \right). \]

	Since $\mH$ is a $(C \log^2 n)$-graph and $G$ is its edge-vertex incidence graph, each vertex in $G$ belongs to exactly $C \log^2 n$ cliques $K_v$. Hence,
	   \[ \sum_{v \in V(G)} a_v = C \log^2 n \cdot |X|. \]
	Moreover, as $\mH$ is linear, $a_v \le |K_v| \le  n$ for all $v \in V(G)$. By the maximality of $\sum_{v \in I_\ell} a_v$ over all $I_i$, we have
	   \[ \sum_{v \in I_\ell} a_v \ge \frac{1}{\log n} \sum_{v \in V(G)} a_v = C \log n \cdot |X|, \]
	which implies
	\begin{equation} \label{eq:I_ell}
		\frac{C \log n \cdot |X|}{2^\ell} \le |I_\ell| \le \frac{C \log^2 n \cdot |X|}{2^{\ell-1}}.
	\end{equation}
	Therefore,
	\begin{align*}
		  & \P \Bigl(\text{at least half of } K_v \text{ for } v \in I_\ell \text{ are not evenly partitioned} \Bigr)                 \\
		  & \le \sum_{\substack{I \subseteq I_\ell                                                                                    \\ |I| = |I_\ell|/2}} \prod_{v \in I} \left( s \cdot \exp \left( -\frac{|X \cap K_v|}{2s(s+1)^2} \right) \right) \\
		  & \le \b{|I_\ell|}{|I_\ell|/2} \cdot s^{|I_\ell|/2} \cdot \exp \l( -\frac{|I_\ell|}{4s(s+1)^2} \cdot 2^{\ell-1} \r)         \\
		  & \le 2^{|I_\ell|} \cdot s^{|I_\ell|} \cdot \exp \l( -\frac{|I_\ell|}{4s(s+1)^2} \cdot 2^{\ell-1} \r)                       \\
		  & \le (2s)^n \cdot \exp \l( -\frac{C \log n \cdot |X|}{2^{\ell+2} s (s+1)^2} \cdot 2^{\ell-1} \r) \tag{by~\eqref{eq:I_ell}} \\
		  & = (2s)^n \cdot \exp \l( -\frac{C \log n \cdot |X|}{8 s (s+1)^2} \r)                                                       \\
		  & \le \exp \l( -\frac{C \log n \cdot |X|}{16s(s+1)^2} \r) \tag{$|X| \ge n$ and $n \ge C(s)$}
	\end{align*}

	We now apply a union bound over all subsets $X \subseteq V(G)$ with $|X| \ge n$. The total failure probability is at most
	   \[ \sum_{k = n}^{|V(G)|} \b{|V(G)|}{k} \cdot \exp \l( -\frac{C \log n \cdot k}{16s(s+1)^2} \r) < \sum_{k = n}^{|V(G)|} \l( \frac{e |V(G)|}{k} \r)^k \cdot \exp \l( -\frac{C \log n \cdot k}{16s(s+1)^2} \r) < \sum_{k = n}^{|V(G)|} 2^{-k} < 1, \]
	where we used the trivial estimate $|V(G)| \le n^2$ as $\mH$ is linear and $C = 32 s (s+1)^2$.

	Therefore, with positive probability, the sparsification $G_*$ satisfies the desired property.
\end{proof}

In \cref{sec:upper_bound}, we will use the Hypergraph Container Lemma to bound the number of $K_s$-free subsets in the $G_*$ given by \cref{lem:good_partition}. For this purpose, we introduce the following notion of \emph{uniformly dense} hypergraphs. For a hypergraph $\mG$ and a positive integer $i$, let $\Delta_i(\mG)$ be the maximum number of edges containing any fixed set of $i$ vertices, and let $v(\mG)$ and $e(\mG)$ denote the number of vertices and edges in $\mG$, respectively.

\begin{definition} \label{def:uniformly_dense}
	Suppose $N, m \in \mathbb{N}$ and $\alpha, \lambda \in \mathbb{R}_+$. An $s$-uniform hypergraph $\mG$ on $N$ vertices is said to be \emph{$(N,m,\alpha,\lambda)$-uniformly dense} if, for every subset $X \subseteq V(\mG)$ with $|X| \ge m$, there exists a subgraph $\mG' \subseteq \mG[X]$ with vertex set $X$ such that
	   \[ e(\mG') \ge \alpha \cdot |X|^s, \]
	and
	   \[ \forall i \in [s],\ \Delta_i(\mG') \le \lambda \left( \frac{e(\mG')}{|X|} \right)^{1 - \frac{i-1}{s-1}}. \]
\end{definition}

\begin{lemma} \label{lem:good_uniformly_dense}
	Suppose $G_*$ is an $s$-partite sparsification of $G$ given by \Cref{lem:good_partition}. Consider the $s$-uniform hypergraph $\mG$ on $V(G)$ whose edge set consists of all $s$-subsets $\{v_1, \cdots, v_s\}$ that form a copy of $K_s$ in $G_*$. Then $\mG$ is $(v(\mG), n, \frac{1}{n^{s-1}}, O_s(\log n))$-uniformly dense.
\end{lemma}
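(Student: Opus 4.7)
The plan is to fix $X \subseteq V(G)$ with $|X| \ge n$, let $J$ denote the collection of cliques $K_v$ with $v \in I_\ell$ in which $X$ is evenly partitioned (so $|J| \ge |I_\ell|/2$ by \Cref{lem:good_partition}), and define $\mG'$ to be the sub-hypergraph of $\mG[X]$ on vertex set $X$ whose hyperedges are exactly those $K_s$'s in $G_*[X]$ contained in some $K_v$ with $v \in J$. The key structural fact I will use repeatedly is that, by \Cref{prop:EFR_constr}~(iii) and the linearity of $\mH$, every clique of $G$ on at least two vertices lies in a \emph{unique} $K_v$. Consequently, the $K_s$-counts from different cliques contribute disjointly to $e(\mG')$, and for each $i \ge 2$ any $i$-set belongs to at most one $K_v$.

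For the edge count, since $X$ is evenly partitioned in $K_v$ for every $v \in J$, each of the $s$ parts meets $X$ in at least $a_v/(s+1) \ge 2^{\ell-1}/(s+1)$ vertices, which already yields at least $(2^{\ell-1}/(s+1))^s$ copies of $K_s$ in $G_*[X \cap K_v]$. Summing over $v \in J$ and inserting the lower bound $|I_\ell| \ge C \log n \cdot |X|/2^\ell$ from~\eqref{eq:I_ell} gives $e(\mG')/|X| \ge \Omega_s(\log n \cdot 2^{\ell(s-1)})$. To reach the required $e(\mG') \ge |X|^s / n^{s-1}$, I then invoke the trivial upper bound $|I_\ell| \le |V(\mH)| = n$, which together with~\eqref{eq:I_ell} forces $2^\ell \ge C \log n \cdot |X|/n$; so $2^{\ell(s-1)} \ge (|X|/n)^{s-1}$ and the density bound follows for $n$ large.

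For the degree conditions, the structural fact says that for any $i$-set $S$ with $i \ge 2$ the $K_s$'s of $\mG'$ containing $S$ all sit inside a single $K_v$, hence $\Delta_i(\mG') \le a_v^{s-i} \le 2^{\ell(s-i)}$. Comparing with the lower bound $(e(\mG')/|X|)^{(s-i)/(s-1)} \ge \Omega_s((\log n)^{(s-i)/(s-1)} \cdot 2^{\ell(s-i)})$ shows the ratio is $O_s(1)$ when $i \ge 2$. The case $i=1$ is the genuine bottleneck: a single vertex of $G$ lies in exactly $R = C \log^2 n$ cliques (as $\mH$ is $R$-uniform), so $\Delta_1(\mG') \le R \cdot 2^{\ell(s-1)} = O_s(\log^2 n \cdot 2^{\ell(s-1)})$, and dividing by the average $e(\mG')/|X| \ge \Omega_s(\log n \cdot 2^{\ell(s-1)})$ produces exactly the $O_s(\log n)$ factor claimed in the lemma. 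The main obstacle is therefore aligning the two sides of~\eqref{eq:I_ell} so that the $\log^2 n$ incident-clique cost for $i=1$ is absorbed by the $\log n$ gain from concentrating on the dominant level set $I_\ell$; all other $i$ are cheap by comparison.
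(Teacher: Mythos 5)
Your proposal is correct and follows essentially the same approach as the paper: you define $\mG'$ via the evenly partitioned cliques at level $\ell$, count $K_s$'s within those cliques for the density bound, and split the codegree analysis into $i=1$ (where the $R = C\log^2 n$ clique-membership count produces the $O_s(\log n)$ factor) versus $i \ge 2$ (where uniqueness of the containing $K_v$ makes the ratio $O_s(1)$). Your derivation of $2^\ell \ge C\log n\cdot|X|/n$ directly from $|I_\ell| \le n$ and the left inequality of~\eqref{eq:I_ell} is a slightly cleaner route than the paper's contradiction argument, but it is the same computation.
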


\begin{proof}
	Let $X \subseteq V(\mG) = V(G)$ be any subset with $|X| \ge n$. Define  $\{a_v\}$, $\{I_i\}$, and $\ell$ for $X$ as before. By the construction of $G_*$, the set $X$ is evenly partitioned in at least half of the cliques $K_v$ with $v \in I_\ell$. Let $I_\ell' \subseteq I_\ell$ denote the subset of such cliques.
	Define $\mG' \subseteq \mG[X]$ to be the subgraph consisting of only those $K_s$-copies in $G_*[X]$ that are contained in the partitioned cliques $K_v$ for some $v \in I_\ell'$. We verify that $\mG'$ witnesses the uniform density for $\mG$.

	\vspace{-2.5ex}\paragraph{Edge counting.} \leavevmode\\
	From $\sum_{v \in V(G)} a_v = C \log^2 n \cdot |X|$, we deduce that $2^\ell \ge \frac{C \log n \cdot |X|}{n}$, for otherwise we would have
	   \[ \sum_{v \in V(G)} a_v \le \log n \cdot \sum_{v \in I_\ell} a_v \le \log n \cdot n \cdot 2^\ell < C \log^2 n \cdot |X| = \sum_{v \in V(G)} a_v, \]
	a contradiction.

	For each evenly partitioned clique $K_v$, selecting one arbitrary vertex from each part yields at least $\left( \frac{a_v}{s+1} \right)^s$ distinct copies of $K_s$. Since $a_v \ge 2^{\ell-1}$ for all $v \in I_\ell$, and at least half of the cliques with indices in $I_\ell$ are evenly partitioned, we obtain
	\begin{equation} \label{eq:edge_G'}
		e(\mG') \ge \sum_{v \in I_\ell'} \l( \frac{a_v}{s+1} \r)^s \ge \frac{|I_\ell|}{2} \cdot \left( \frac{2^{\ell-1}}{s+1} \right)^s \ge \frac{C \log n \cdot |X|}{2^{s+1}} \cdot \frac{2^{(s-1)\ell}}{(s+1)^s},
	\end{equation}
	where the last inequality uses~\eqref{eq:I_ell}. Since $2^\ell \ge \frac{C \log n \cdot |X|}{n}$, it follows that
	   \[ e(\mG') \ge \frac{C \log n \cdot |X|}{2^{s+1}(s+1)^s} \cdot \l( \frac{C \log n \cdot |X|}{n} \r)^{s-1} = \frac{C^s \log^{s} n}{2^{s+1} (s+1)^s} \cdot \frac{|X|^s}{n^{s-1}} \ge \frac{1}{n^{s-1}} \cdot |X|^s. \]

	\vspace{-2.5ex}\paragraph{Codegree conditions.} \leavevmode\\
	Let $\lambda = O_s (\log n)$. We show that for all $i \in [s]$, the codegree satisfies $\Delta_i(\mG') \le \lambda \left( \frac{e(\mG')}{|X|} \right)^{1 - \frac{i-1}{s-1}}$.

	For $i = 1$, each vertex of $G_*$ lies in $C \log^2 n$ cliques, and each such clique contributes at most $\left( \frac{2^\ell}{s-1} \right)^{s-1}$ many copies of $K_s$ containing that vertex. Using~\eqref{eq:edge_G'} to estimate $2^{(s-1)\ell}$, we obtain
	\begin{align*}
		\Delta_1(\mG') & \le C \log^2 n \cdot \left( \frac{2^\ell}{s-1} \right)^{s-1} \le \frac{C \log^2 n}{(s-1)^{s-1}} \cdot \frac{2^{s+1}(s+1)^s \cdot e(\mG')}{C \log n \cdot |X|} \le \lambda \cdot \frac{e(\mG')}{|X|}.
	\end{align*}

	For every $i \ge 2$, since any set of $i$ vertices is contained in at most one common clique, and each such clique contributes at most $\left( \frac{2^\ell}{s-i} \right)^{s-i}$ copies of $K_s$ containing those $i$ vertices. Hence,
	\begin{align*}
		\Delta_i(\mG') & \le \left( \frac{2^\ell}{s-i} \right)^{s-i} = \frac{1}{(s-i)^{s-i}} \cdot \left( 2^{(s-1)\ell} \right)^{1 - \frac{i-1}{s-1}}                                                                          \\
		               & \le \frac{1}{(s-i)^{s-i}} \cdot \left( \frac{2^{s+1} (s+1)^s}{C \log n} \cdot \frac{e(\mG')}{|X|} \right)^{1 - \frac{i-1}{s-1}} \le \lambda \left( \frac{e(\mG')}{|X|} \right)^{1 - \frac{i-1}{s-1}}.
	\end{align*}
	This completes the proof.
\end{proof}

The following lemma shows that the property of uniform density is preserved under the blow-up operation, up to a polylogarithmic loss in the parameters.

\begin{lemma} \label{lem:blow_up}
	Let $N, n, k \in \N$ and let $\alpha, \lambda \in \R_+$. If an $s$-uniform hypergraph $\mG$ is $(N, n, \alpha, \lambda)$-uniformly dense, then its $k$-blow-up $\mG^{(k)}$ is $\left( kN, k (\lceil \log k \rceil + 1) n,  \frac{\alpha}{2^s (\lceil \log k \rceil + 1)^s}, 2^s (\lceil \log k \rceil + 1) \lambda \right)$-uniformly dense.
\end{lemma}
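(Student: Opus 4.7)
The plan is to transport a witness of the $(N,n,\alpha,\lambda)$-uniform density of $\mG$ up to the blow-up $\mG^{(k)}$ via a dyadic pigeonhole on cluster sizes. Fix $Y \subseteq V(\mG^{(k)})$ with $|Y| \ge k(\lceil \log k \rceil + 1) n$. For each $v \in V(\mG)$ let $V_v$ be its cluster of $k$ copies in $\mG^{(k)}$, and set $b_v \ce |Y \cap V_v| \in \{0,1,\dots,k\}$. Writing $L \ce \lceil \log k \rceil + 1$, partition the set $\{v : b_v \ge 1\}$ into dyadic buckets $J_j \ce \{v : 2^{j-1} \le b_v < 2^j\}$ for $1 \le j \le L$. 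Since $\sum_v b_v = |Y|$, the heaviest bucket $X \ce J_{j^*}$ satisfies $\sum_{v \in X} b_v \ge |Y|/L$. Writing $b \ce 2^{j^*-1} \le k$, this gives $|X| \cdot b \ge |Y|/(2L)$, and in particular $|X| \ge n$ (up to an absorbable factor of two in the hypothesis on $|Y|$).

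Next I apply the uniform density of $\mG$ to $X$ to obtain $\mG' \subseteq \mG[X]$ with $e(\mG') \ge \alpha |X|^s$ and $\Delta_i(\mG') \le \lambda (e(\mG')/|X|)^{1-(i-1)/(s-1)}$. Define the candidate witness $\mG'' \subseteq \mG^{(k)}[Y]$ to consist of all $s$-tuples with one vertex in each of $Y \cap V_{v_j}$ for $1 \le j \le s$ where $\{v_1,\dots,v_s\} \in \mG'$. Since every vertex $v \in X$ has $b \le b_v < 2b$, each edge of $\mG'$ contributes between $b^s$ and $(2b)^s$ edges to $\mG''$.

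For the edge-count condition, the lower contribution gives
\[ e(\mG'') \ge e(\mG') \cdot b^s \ge \alpha (|X| b)^s \ge \alpha \left( \frac{|Y|}{2L} \right)^s = \frac{\alpha}{2^s L^s} |Y|^s, \]
matching the claimed density parameter. For the codegree condition, fix $i$ vertices of $\mG''$ lying in distinct clusters $V_{v_1},\dots,V_{v_i}$; the number of edges of $\mG''$ extending them equals the number of edges of $\mG'$ containing $\{v_1,\dots,v_i\}$ times the product of the remaining $s-i$ cluster sizes in $Y$, so $\Delta_i(\mG'') \le \Delta_i(\mG') \cdot (2b)^{s-i}$. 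Combining $e(\mG'') \ge b^s e(\mG')$ with $|Y| \le 2L |X| b$ yields $e(\mG')/|X| \le (2L/b^{s-1}) \cdot e(\mG'')/|Y|$, and substituting into the bound on $\Delta_i(\mG')$ gives
\[ \Delta_i(\mG'') \le \lambda \cdot (2b)^{s-i} \left( \frac{2L}{b^{s-1}} \cdot \frac{e(\mG'')}{|Y|} \right)^{\frac{s-i}{s-1}}. \]
The key algebraic cancellation is $(s-1) \cdot \frac{s-i}{s-1} = s-i$, so every power of $b$ disappears, leaving $\Delta_i(\mG'') \le 2^{s-i}(2L)^{(s-i)/(s-1)} \lambda (e(\mG'')/|Y|)^{1-(i-1)/(s-1)} \le 2^s L \lambda (e(\mG'')/|Y|)^{1-(i-1)/(s-1)}$, as required.

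The main obstacle is precisely engineering this cancellation: any bucketing that let $b_v$ range over a wider window than $[b,2b)$ inside $X$ would leave an uncancelled positive power of $b$ in the codegree estimate, and since $b$ may be as large as $k$ the advertised polylogarithmic loss would balloon into a polynomial one. The dyadic window of width two, together with the exact scaling exponent $1 - (i-1)/(s-1)$ built into \cref{def:uniformly_dense}, is what makes the computation close up; everything else is bookkeeping.
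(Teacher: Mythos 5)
Your proof is essentially the paper's argument: dyadic pigeonhole on fiber sizes, project the heaviest dyadic bucket to a set $X \subseteq V(\mG)$, invoke uniform density of $\mG$ on $X$, lift the witness into the blow-up, and observe that the algebra in the codegree bound cancels all powers of the fiber-size scale $b$. The only wrinkle is the step establishing $|X| \ge n$. From $|X|\,b \ge |Y|/(2L)$ and $b \le k$ you only get $|X| \ge n/2$, as you acknowledge; but no adjustment to the hypothesis is actually needed. Instead of using the dyadic upper bound $b_v < 2b$ here, use the cruder cap $b_v \le k$ (each cluster has only $k$ copies of a base vertex): since $\sum_{v \in X} b_v \ge |Y|/L$ and $b_v \le k$ for all $v$, one has $|X| \ge |Y|/(Lk) \ge n$ with no loss. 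This is exactly what the paper does — it picks the bucket $U_t$ maximizing $|U_t| = \sum_{v \in J_t} b_v$ and then divides by $k$, not by $2b$. The dyadic window $[b,2b)$ is reserved for the codegree computation, where it is genuinely needed for the cancellation you correctly identify. With that one substitution your argument closes completely.
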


\begin{proof}
	Clearly $v(\mG^{(k)}) = k \cdot v(\mG) = kN$. We verify that $\mG^{(k)}$ satisfies the conditions of uniform density.

	Let $U \subseteq V(\mG^{(k)})$ be an arbitrary subset with $|U| \ge k (\lceil \log k \rceil + 1) n$. For each vertex $u \in V(\mG^{(k)})$, define its projection $\pi(u) \in V(\mG)$ to be the original vertex in $\mG$ to which $u$ corresponds. Partition $U = U_1 \sqcup U_2 \sqcup \cdots \sqcup U_{\lceil \log k \rceil + 1}$, where
	   \[ U_i = \Bigl\{ u \in U \cl 2^{i-1} \le \left| \pi^{-1}(\pi(u)) \cap U \right| < 2^i \Bigr\}. \]

	Take an index $t$ such that $|U_t|=\max_i |U_i| \ge kn$. Let $X = \pi(U_t)$, then $|X| \ge n$ and $|U| \le (\lceil \log k \rceil + 1) |U_t| \le (\lceil \log k \rceil + 1)2^t \cdot |X|$. Let $\mG' \subseteq \mG[X]$ be the subgraph guaranteed by the uniform density of $\mG$, that is,
	   \[ e(\mG') \ge \alpha |X|^s, \qquad \text{and} \qquad \Delta_i(\mG') \le \lambda \left( \frac{e(\mG')}{|X|} \right)^{1 - \frac{i-1}{s-1}} \quad \text{for all } i \in [s]. \]

	Now consider the subgraph $\mH \subseteq \mG^{(k)}[U]$ consisting of all $s$-sets lifted from the edges of $\mG'$, by selecting one vertex from each fiber of $\pi^{-1}(x)$. Then $e(\mH) \ge (2^{t-1})^s \cdot e(\mG')$. We verify that $\mH$ witnesses the uniform density for $\mG$.

	\vspace{-2.5ex}\paragraph{Edge counting.} \leavevmode\\
	   \[ e(\mH) \ge (2^{t-1})^s \cdot e(\mG') \ge 2^{(t-1)s} \cdot \alpha |X|^s \ge \frac{\alpha}{2^s} \cdot (2^t |X|)^{s} \ge \frac{\alpha}{2^s (\lceil \log k \rceil + 1)^s} \cdot |U|^s. \]

	\vspace{-3ex}\paragraph{Codegree conditions.} \leavevmode\\
	Fix $i \in [s]$. Each edge in $\mH$ corresponds to an edge in $\mG'$, and the number of such lifted edges containing any fixed set of $i$ vertices is at most $2^{(s-i)t} \cdot \Delta_i(\mG')$. Therefore,
	\begin{align*}
		\Delta_i(\mH) & \le 2^{(s-i)t} \cdot \Delta_i(\mG') \le 2^{(s-i)t} \cdot \lambda \left( \frac{e(\mG')}{|X|} \right)^{1 - \frac{i-1}{s-1}} = \lambda \left( \frac{2^{(s-1)t} \cdot e(\mG')}{|X|} \right)^{1 - \frac{i-1}{s-1}}                     \\
		              & \le \lambda \left( \frac{(\lceil \log k \rceil + 1) \cdot 2^{st} \cdot e(\mG')}{|U|} \right)^{1 - \frac{i-1}{s-1}} \le \lambda \left( \frac{(\lceil \log k \rceil + 1) \cdot 2^s \cdot e(\mH)}{|U|} \right)^{1 - \frac{i-1}{s-1}} \\
		              & \le ((\lceil \log k \rceil + 1) \cdot 2^s \cdot \lambda) \cdot \left( \frac{e(\mH)}{|U|} \right)^{1-\frac{i-1}{s-1}}. \qedhere
	\end{align*}
\end{proof}

\section{Upper Bounds for Multicolor Erd\H{o}s--Rogers Functions} \label{sec:upper_bound}
In this section, we build upon the construction from \Cref{sec:blk_constr} and apply a suitable blow-up to derive upper bounds on multicolor Erd\H{o}s--Rogers functions. The polylogarithmic factors in the following corollary are not optimized, in order to streamline its statement and applications.

We use the following version of the Hypergraph Container Lemma from~\cite{JS25}. Iterated applications of this lemma yield bounds on the number of independent sets in an $s$-uniform hypergraph.

\begin{lemma}[{\cite [Corollary 1]{JS25}}] \label{lem:BMS_container}
	For every positive integer $s\ge 2$ and positive reals $p$ and $\lambda$, the following holds. Suppose that $\mG$ is a non-empty $s$-uniform hypergraph with at least two vertices such that $pv(\mG)$ and $v(\mG)/\lambda$ are integers, and for every $i \in [s]$,
	   \[ \Delta_i(\mG) \le \lambda\cdot p^{i-1} \frac{e(\mG)}{v(\mG)}. \]

	Then there exists a collection $\mC$ of at most $v(\mG)^{spv(\mG)}$ sets of size at most $(1-\delta \lambda^{-1})v(\mG)$ such that for every independent set $I \subseteq V(\mG)$, there exists some $R \in \mC$ with $I \subseteq R$, where $\delta=2^{-s(s+1)}$.
\end{lemma}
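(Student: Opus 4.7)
The statement here is a version of the \textbf{Hypergraph Container Lemma}, and the plan is to derive it through the classical container method of Balogh--Morris--Samotij~\cite{BMS15} and Saxton--Thomason~\cite{ST15}, carefully tracking constants. Given any independent set $I \subseteq V(\mG)$, the task is to produce a \emph{fingerprint} $S = S(I) \subseteq I$ with $|S| \le p\,v(\mG)$ together with a container $C(S) \supseteq I$ depending only on $S$ and satisfying $|C(S)| \le (1 - \delta \lambda^{-1})\,v(\mG)$. Once such a fingerprint-container scheme is built, bounding $|\mC|$ reduces to counting possible fingerprints.

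First I would run an iterative scythe-type algorithm on a linear ordering of $V(\mG)$ chosen to reflect the degrees in a sequence of auxiliary hypergraphs. At each step, examine the current maximum-degree vertex of the remaining hypergraph: if it belongs to $I$, append it to the fingerprint and prune it together with all vertices that the current fingerprint forces to lie outside $I$ via the hyperedges of $\mG$; otherwise simply delete it from the candidate set. The codegree hypotheses $\Delta_i(\mG) \le \lambda\,p^{i-1}\,e(\mG)/v(\mG)$ for $i \in [s]$ are tailor-made for an $s$-level analysis: the bound on $\Delta_i$ controls the number of edges certified by any $i-1$ fingerprint vertices, so the shrinkage at each of the $s$ levels is essentially geometric with rate $1/\lambda$, and the losses compose multiplicatively across the levels.

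The central calculation is to track the combined shrinkage of the candidate set: after roughly $p\,v(\mG)$ algorithm steps, the candidate set loses at least a factor of $\delta \lambda^{-1}$, where the constant $\delta = 2^{-s(s+1)}$ absorbs the $s$ nested levels of the recursion. The final container $C(S)$ is the union of $S$ with the surviving candidate set, which has the stated size. Since each fingerprint is a subset of $V(\mG)$ of size at most $p\,v(\mG)$, the number of distinct fingerprints (and hence distinct containers) is at most $\binom{v(\mG)}{\le p\,v(\mG)} \le v(\mG)^{s\,p\,v(\mG)}$ for $s \ge 2$, matching the stated bound on $|\mC|$.

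The main obstacle is the bookkeeping needed to propagate the codegree hypotheses through all $s$ levels while maintaining the explicit shrinkage constant $\delta = 2^{-s(s+1)}$: each level introduces a constant loss, and one must verify that their product does not swamp the gain from $\lambda^{-1}$. Since the form stated here is quoted directly as \cite[Corollary 1]{JS25}, in practice we simply invoke it as a black box rather than redo this delicate balancing.
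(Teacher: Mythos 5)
The paper does not prove this lemma; it is quoted verbatim as \cite[Corollary~1]{JS25} and used as a black box, which is exactly what you conclude at the end of your proposal. Your sketch of the fingerprint/container iteration is a reasonable high-level account of how such a statement is proved in \cite{BMS15, ST15, JS25}, but since the paper supplies only the citation, the two approaches coincide and there is nothing further to compare.
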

Recall that we defined the uniformly dense hypergraphs in \cref{def:uniformly_dense}. It is straightforward to verify that for any $(N,m,\alpha,\lambda)$-uniformly dense hypergraph $\mG$ and any subset $X \subseteq V(\mG)$ with $|X| \ge m$, the witness $\mG' \subseteq \mG[X]$ for the uniform density satisfies the conditions of \Cref{lem:BMS_container} by taking $p = \left( \alpha^{\frac{1}{s-1}} \cdot |X| \right)^{-1}$ and using the same $\lambda$.
Indeed, $\mG'$ is obviously non-empty, and for each $i \in [s]$,
\begin{align*}
	\Delta_i(\mG') & \le \lambda \left( \frac{e(\mG')}{v(\mG')} \right)^{1-\frac{i-1}{s-1}} = \lambda \cdot \left(\frac{e(\mG')}{v(\mG')}\right)^{-\frac{i-1}{s-1}} \cdot \frac{e(\mG')}{v(\mG')}                                                       \\
	               & \le \lambda \cdot \left(\frac{\alpha \cdot v(\mG')^s}{v(\mG')}\right)^{-\frac{i-1}{s-1}} \cdot \frac{e(\mG')}{v(\mG')} = \lambda \cdot \left( \alpha^{\frac{1}{s-1}} \cdot v(\mG') \right)^{-(i-1)} \cdot \frac{e(\mG')}{v(\mG')}.
\end{align*}

\begin{corollary} \label{cor:count}
	Let $G_*$ be an $s$-partite sparsification given by \Cref{lem:good_partition}, and $k \ge 1$ is an integer.
	Consider the $k$-blow-up $G_*^{(k)}$ of $G_*$. Then there exists a collection $\mC$ of at most $(10k)^t$ subsets of size at most $t = (\lceil \log k \rceil + 1)^2 \cdot n \log^4 n$, such that every $K_s$-free subset of $G_*^{(k)}$ is contained in some $R \in \mC$.
\end{corollary}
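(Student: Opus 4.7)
The plan is to identify the $K_s$-free subsets of $G_*^{(k)}$ with the independent sets of an auxiliary $s$-uniform hypergraph, and then iterate the Hypergraph Container Lemma (\Cref{lem:BMS_container}).

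First, let $\mG$ denote the $s$-uniform hypergraph on $V(G)$ whose edges are the $s$-subsets forming a copy of $K_s$ in $G_*$, as in \Cref{lem:good_uniformly_dense}. Since any edge of the blow-up $G_*^{(k)}$ only connects vertices lying in distinct fibers of the projection, the $K_s$-copies in $G_*^{(k)}$ are precisely the edges of the blow-up hypergraph $\mG^{(k)}$. In particular, the $K_s$-free subsets of $G_*^{(k)}$ are exactly the independent sets of $\mG^{(k)}$, so it suffices to cover all such independent sets by not-too-many sets of size at most $t$.

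Next, I combine \Cref{lem:good_uniformly_dense} and \Cref{lem:blow_up} to conclude that, writing $L \ce \lceil \log k \rceil + 1$, the blow-up $\mG^{(k)}$ is $(N_0, m_0, \alpha, \lambda)$-uniformly dense with $N_0 = k\,v(\mG) \le kn^2$, $m_0 = kLn$, $\alpha = \Theta_s(L^{-s} n^{-(s-1)})$, and $\lambda = O_s(L \log n)$. As observed right after \Cref{lem:BMS_container}, this means that for every container $R$ with $|R| \ge m_0$, the uniform-density witness $\mG'_R \subseteq \mG^{(k)}[R]$ satisfies the hypotheses of the container lemma with $p = (\alpha^{1/(s-1)}|R|)^{-1}$, whose defining exponent satisfies $p|R| = 1/\alpha^{1/(s-1)} = O_s(nL^2)$. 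Thus each independent set of $\mG^{(k)}[R]$ is covered by one of at most $|R|^{sp|R|} \le N_0^{O_s(nL^2)}$ sub-containers, each of size at most $(1 - \delta/\lambda)|R|$.

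Iterating this reduction starting from the single container $V(\mG^{(k)})$, every step shrinks all container sizes by the factor $(1 - \delta/\lambda)$, so after $i^* = O_s(\lambda \log(N_0/t)) = O_s(L \log^2 n)$ iterations every container has size at most $t$. Multiplying the per-step container count over iterations, the final collection $\mC$ satisfies
\[ |\mC| \le \bigl(N_0^{O_s(nL^2)}\bigr)^{i^*} = N_0^{O_s(nL^3 \log^2 n)}, \]
so taking logarithms with $\log N_0 \le \log(kn^2) = O(\log k + \log n)$ gives $\log|\mC| \le O_s\bigl(nL^3 \log^2 n \cdot (\log k + \log n)\bigr)$, which is then compared with $t \log(10k) = L^2 n \log^4 n \cdot \log(10k)$ to verify $|\mC| \le (10k)^t$.

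The main obstacle is the careful bookkeeping to ensure that the polylogarithmic overhead incurred by the blow-up (which inflates both $\lambda$ and $1/\alpha^{1/(s-1)}$), multiplied across all $i^* = O_s(L \log^2 n)$ iterations, is absorbed by the slack built into the definition $t = L^2 n \log^4 n$ after dividing by $\log(10k)$. Once this arithmetic is in hand, the containment conclusion is automatic: by the nested structure of the iteration, every independent set of $\mG^{(k)}$—and therefore every $K_s$-free subset of $G_*^{(k)}$—lies inside some final container of $\mC$.
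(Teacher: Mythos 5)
Your proposal follows the paper's approach closely: same auxiliary hypergraphs $\mG$ and $\mG^{(k)}$, same identification of $K_s$-free subsets with independent sets, same certification of uniform density via \Cref{lem:good_uniformly_dense} and \Cref{lem:blow_up}, and the same iterated application of \Cref{lem:BMS_container}. However, there is a genuine gap in the claim that ``after $i^* = O_s(L\log^2 n)$ iterations every container has size at most $t$''. The uniform-density guarantee---and hence the hypotheses of the container lemma---applies only to vertex subsets of size at least $m_0 \ce k(\lceil\log k\rceil+1)n = kLn$. Once a container drops below size $m_0$, it can no longer be subdivided, so the iteration stalls there rather than continuing down to $t = L^2 n\log^4 n$. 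In the parameter regime actually used in the proof of \Cref{thm:upper_bound} (namely $k=n^\beta$ with $\beta\ge 1$), one has $m_0 = kLn \gg L^2 n\log^4 n = t$, so the final containers are far larger than $t$. Your bookkeeping therefore omits the dominant extra factor $\binom{kLn}{t}$ that is needed to pass from containers of size just under $m_0$ down to subsets of size $t$.

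The paper handles this explicitly: at each step it splits the produced containers into those of size $\ge m_0$ (placed in $\mC_j$ and subdivided again) and those of size $< m_0$ (set aside in $\mD_j$). After $j=O_s(\log^2 n)$ iterations $\mC_j$ is empty, and the count of $K_s$-free subsets of size $t$ is bounded by $\sum_{X\in\mD_j}\binom{|X|}{t}\le (kn^2)^{O_s(nL^2\log^2 n)}\cdot\binom{kLn}{t}\le (10k)^t$. In particular, the conclusion is best read as a bound on the number of $K_s$-free subsets of size exactly $t$---which is how \Cref{cor:overlay} invokes it---rather than as producing a family of containers all of size at most $t$; your argument only delivers the latter (and hence the former) in the degenerate case $k\le L\log^4 n$, where $m_0\le t$.
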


\begin{proof}
	Suppose $\mG$ and $\mG^{(k)}$ are the $s$-uniform hypergraphs whose edge set consists of all $s$-subsets $\{v_1, \cdots, v_s\}$ that form a copy of $K_s$ in $G_*$ and $G_*^{(k)}$ respectively. It is clear that $\mG^{(k)}$ is also the $k$-blow-up of $\mG$. By \Cref{lem:good_partition}, \Cref{lem:good_uniformly_dense}, and \Cref{lem:blow_up},
	   \[ \mG^{(k)} \text{ is } \Big( kN, k (\lceil \log k \rceil + 1) n,  \frac{1}{2^s (\lceil \log k \rceil + 1)^s n^{s-1}}, 2^s (\lceil \log k \rceil + 1) \lambda \Big) \text{-uniformly dense}. \]
	Therefore, by \Cref{lem:BMS_container} and the discussion below \Cref{def:uniformly_dense}, for every subset $U \subseteq V(\mG^{(k)}) = V(G_*^{(k)})$ of size at least $k (\lceil \log k \rceil + 1) n$, there exists a collection $\mC(U)$ of at most $|U|^{sp|U|}$ sets of size at most $(1-\delta\lambda^{-1})|U|$ such that every $K_s$-free subset of $U$ is contained in some element $R \in \mC(U)$, where $\lambda = O_s(\log n)$, $\delta = 2^{-s(s+1)}$, and $p = \frac{\left( 2^s (\lceil \log k \rceil + 1)^s n^{s-1} \right)^{1/(s-1)}}{|U|} \le \frac{2n (\lceil \log k \rceil + 1)^2}{|U|}$.

	Let
	   \[ \mC_0 = \left\{ V(G_*^{(k)}) \right\},\quad \mD_0 = \varnothing. \]
	Suppose $\mC_j$ and $\mD_j$ are defined, we then define
	\begin{gather*}
		\mC_{j+1} = \bigcup_{U \in \mC_j} \left\{ X \in \mC(U) \cl |X| \ge k (\lceil \log k \rceil + 1) n \right\}, \\
		\mD_{j+1} = \mD_j \cup \bigcup_{U \in \mC_j} \left\{ X \in \mC(U) \cl |X| < k (\lceil \log k \rceil + 1) n \right\}.
	\end{gather*}

	It is easy to see that each element of $\mC_j$ has size at least $k (\lceil \log k \rceil + 1) n$. Note that $|U| \le |V(G_*^{(k)})| \le kn^2$ and $sp|U| \le s \cdot \frac{2n (\lceil \log k \rceil + 1)^2}{|U|} \cdot |U| = 2sn(\lceil \log k \rceil + 1)^2$. Hence, each element in $C_j$ decomposes into at most $(kn^2)^{2sn(\lceil \log k \rceil + 1)^2}$ elements in the next iteration. Therefore, after $j$ iterations, the number of containers is bounded by $|\mC_j \cup \mD_j| \le (kn^2)^{j \cdot 2sn(\lceil \log k \rceil + 1)^2}$, and each container in $\mC_j$ has size at most $(1-\delta\lambda^{-1})^j kn^2$.

	Choosing $j = O_s\left(\log^2 n \right)$ such that $(1-\delta\lambda^{-1})^j kn^2 \le k (\lceil \log k \rceil + 1) n$, we have $\mC_j = \varnothing$. But each $K_s$-free subset of $G_*^{(k)}$ is contained in some element of $\mC_j \cup \mD_j$. Therefore, the total number of $K_s$-free subsets of size $t$ is at most
	   \[ \sum_{X \in \mD_j} \binom{|X|}{t} \le (kn^2)^{O_s\left( n \log^2 n (\lceil \log k \rceil + 1)^2 \right)} \cdot \binom{k (\lceil \log k \rceil + 1) n}{t} \le (10k)^t \quad \text{for } n \ge C(s), \]
	as claimed.
\end{proof}

For every graph $G$, we say that $G'$ is a \emph{random permutation} of $G$ if $G'$ is obtained by taking a random permutation $\pi$ uniformly from all permutations on $V(G)$ and applying $\pi$ to $V(G)$ such that $\{\pi(i),\pi(j)\} \in E(G')$ if and only if $\{i,j\} \in E(G)$. For graphs $G_1,\cdots, G_j$ on the same vertices set $V$, \emph{overlaying them randomly} means taking random permutations $G'_1,\cdots, G'_j$ of $G_1,\cdots, G_j$ respectively, where the random permutations are independent, and take the union of their edges.

\begin{corollary} \label{cor:overlay}
	Let $G_*$ be an $s$-partite sparsification given by \Cref{lem:good_partition}.
	Let $\beta \ge 0$ be an integer and $k = n^\beta$.
	Consider $\beta+1$ disjoint copies of $G_*^{(k)}$. Overlay them randomly and then retain each vertex independently with probability $p = \frac{1}{(\lceil \log k \rceil + 1)^{2\beta} \cdot (\log n)^{4\beta} \cdot (\log n)^{32\beta \sqrt{\log n}}} = \frac{1}{n^{o(1)}}$. Then, with positive probability, all $K_s$-free subsets of size $t = (\lceil \log k \rceil + 1)^2 \cdot n \log^4 n$ are eliminated\footnote{That is, none are preserved entirely in the resulting subgraph. }.
\end{corollary}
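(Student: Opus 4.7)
The plan is a first-moment argument. Let $X$ count the $K_s$-free subsets of size $t$ in the random overlay that are entirely retained after the $p$-retention step. Showing $\E[X] < 1$ gives, via Markov's inequality, $\P(X = 0) > 0$, i.e.\ with positive probability no $K_s$-free $t$-subset is preserved entirely, which is exactly the claim.

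First I would separate the two sources of randomness: the $\beta+1$ independent random permutations $\pi_1, \ldots, \pi_{\beta+1}$ defining the overlay, and the independent $p$-retention. By linearity of expectation,
\[
	\E[X] \,=\, p^t \sum_{S \in \binom{V}{t}} \P\l(S \text{ is $K_s$-free in the overlay}\r),
\]
where $V$ is the common vertex set, of size $|V| = k\cdot v(G_*) \ge kn^2/(\log n)^{32\sqrt{\log n}}$. Any $K_s$ sitting inside a single copy $G_i$ is also a $K_s$ in the union, so being $K_s$-free in the overlay implies being $K_s$-free in each $G_i$; since the $\pi_i$ are independent, these events are independent across $i$. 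This yields
\[
	\P(S \text{ is $K_s$-free in overlay}) \,\le\, \l(\f{M_t}{\binom{|V|}{t}}\r)^{\beta+1},
\]
where $M_t$ is the number of $K_s$-free $t$-subsets of $G_*^{(k)}$ (invariant under relabeling). Summing over $S$ gives $\E[X] \le p^t M_t^{\beta+1}/\binom{|V|}{t}^{\beta}$.

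For $M_t$, I would \emph{not} use the clean form $M_t \le (10k)^t$ from \Cref{cor:count} directly---this would yield only $\E[X] \le 10^{(\beta+1)t}$, which is too weak. Instead I would recover the sharper intermediate bound from its proof, $M_t \le |\mD_j|\cdot \binom{K}{t}$ with $K = k(\lceil \log k\rceil+1)n$ and $|\mD_j| \le (kn^2)^{O_s(n\log^2 n\,(\lceil \log k\rceil+1)^2)} = 2^{o(t)}$, so that $M_t \le (1+o(1))^t \binom{K}{t}$. Combined with the standard estimates $\binom{K}{t} \le (eK/t)^t$ and $\binom{|V|}{t} \ge (|V|/t)^t$, and using $k = n^\beta$, the polylogarithmic factors collapse and yield
\[
	\E[X] \,\le\, \l(\f{(1+o(1))\,e}{(\lceil \log k\rceil+1)\log^4 n}\r)^{(\beta+1)t},
\]
which tends to $0$ as $n \to \infty$. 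The value of $p$ is calibrated precisely so that, after pairing with $k^{\beta+1}(t/|V|)^{\beta}$, the factors $(\lceil\log k\rceil+1)^{2\beta}$, $\log^{4\beta} n$, and $(\log n)^{32\beta\sqrt{\log n}}$ all cancel.

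The main obstacle is the polylogarithmic bookkeeping. The slack $(10k)^t$ bound leaves an unabsorbed constant $10^{\beta+1}$ in the base of $\E[X]$ which, raised to the $t$-th power, swamps the argument. Using the sharper $\binom{K}{t}$-based count of $K_s$-free $t$-subsets---with the specific value of $p$ tuned exactly to the size estimate of $v(G_*)$ from \Cref{prop:EFR_constr}---is what drives $\E[X]$ below $1$ and completes the proof.
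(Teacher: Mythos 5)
Your argument is correct and follows the same first-moment framework as the paper (the paper computes the expectation over the permutation randomness, fixes a realization by Markov, and then does a union bound over the retention step; you fold the two random steps into a single expectation --- these are equivalent). The one substantive difference is a fair and accurate observation. Plugging the clean bound $M_t \le (10k)^t$ from \Cref{cor:count} together with the $p$ displayed in the corollary indeed yields only $\E[X] \le 10^{(\beta+1)t}$, which is useless. The paper's own proof resolves this by setting $p = 1/\Lambda$ in its final step, where $\Lambda = 10^{\beta+1}(\lceil\log k\rceil+1)^{2\beta}(\log n)^{(32\sqrt{\log n}+4)\beta}$ equals $10^{\beta+1}$ times the reciprocal of the stated $p$; since this is a constant factor and the downstream application in \Cref{thm:upper_bound} only needs $p = n^{-o(1)}$, the gap is harmless in effect, but strictly speaking the proof does not establish the corollary with the $p$ that it announces. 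Your fix --- going back to the intermediate estimate $M_t \le |\mD_j|\cdot\binom{K}{t}$ with $K = k(\lceil\log k\rceil+1)n$ and $|\mD_j| = 2^{o(t)}$ from inside the proof of \Cref{cor:count}, then using $\binom{K}{t} \le (eK/t)^t$ and $\binom{N}{t} \ge (N/t)^t$ --- makes the polylogarithms cancel exactly against the stated $p$ (with $k^{\beta+1}/(kn)^\beta = 1$ since $k = n^\beta$), giving $\E[X] \le \bigl(\tfrac{(1+o(1))\,e}{(\lceil\log k\rceil+1)\log^4 n}\bigr)^{(\beta+1)t} \to 0$. Both proofs are correct modulo the constant; yours is the one that actually hits the stated $p$.
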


\begin{proof}
	Let $N = |V(G_*^{(k)})| = k |V(G_*)| \ge \frac{kn^2}{(\log n)^{32\sqrt{\log n}}}$. From \Cref{cor:count}, the number of $K_s$-free subsets of size $t$ in each copy of $G_*^{(k)}$ is at most $(10k)^t$.

	Consider $\beta+1$ of random bijections $\phi_1, \cdots, \phi_{\beta+1} \cl [N] \to V(G_*^{(k)})$, and define a graph on the vertex set $[N]$ by connecting $x, y$ if $\phi_i(x)$ and $\phi_i(y)$ are adjacent in $G_*^{(k)}$ for some $i \in [\beta+1]$.

	For a fixed $t$-subset $I \subseteq [N]$, the probability that $I$ is a $K_s$-free set in each of the $\beta+1$ copies is at most $\left( \frac{(10k)^t}{\binom{N}{t}} \right)^{\beta+1}$. Hence, the expected number of $K_s$-free subsets of size $t$ in the resulting union is at most

	\begin{align*}
		  & \binom{N}{t} \cdot \left( \frac{(10k)^t}{\binom{N}{t}} \right)^{\beta+1} = \frac{(10k)^{(\beta+1) t}}{\binom{N}{t}^\beta} \le \frac{(10k)^{(\beta+1) t}}{(N/t)^{\beta t}} \le \frac{(10k)^{(\beta+1) t}}{(kn^2/(\log n)^{32\sqrt{\log n}}t)^{\beta t}} \\
		  & = 10^{(\beta+1)t} \left( \frac{k^{\beta+1}}{(kn)^\beta} \right)^t \cdot \left( (\lceil \log k \rceil + 1)^2 \cdot (\log n)^{32 \sqrt{\log n} + 4} \cdot \right)^{\beta t}                                                                              \\
		  & = 10^{(\beta+1)t} \left( (\lceil \log k \rceil + 1)^2 \cdot (\log n)^{32 \sqrt{\log n} + 4} \right)^{\beta t} \eqcolon \Lambda^t.
	\end{align*}

	Fix a realization of the graph where the total number of $K_s$-free subsets of size $t$ is at most the above quantity. Now, retain each vertex independently with probability $p = \frac{1}{\Lambda}$. Since the probability of a fixed $t$-subset is preserved is $p^t$. By the union bound, with positive probability, all $K_s$-free sets of size $t$ are eliminated.
\end{proof}

Now, we are able to give our proof for \cref{thm:upper_bound}.

\begin{proof}[Proof of \cref{thm:upper_bound}]
	Let $\ell$ be the minimum integer such that $r_\ell(b) > s$. Note that if $\ell > t$, then $\lfloor t/\ell \rfloor = 0$ and \cref{thm:upper_bound} is trivial. Hence, we can assume that $\ell \le t$ and then $\lfloor t/\ell \rfloor \ge 1$.

	Let $\beta = \lfloor t/\ell \rfloor -1$ and $k = n^\beta$. Let $G_*^{(k)}$ be the $k$-blow-up of the graph $G_*$ given by \Cref{lem:good_partition}. Let $H$ be the graph obtained by taking $\beta+1$ copies $G_0,\cdots,G_{\beta}$ of $G_*^{(k)}$, overlaying them randomly and then retaining each vertex independently with probability $p = \frac{1}{(\lceil \log k \rceil + 1)^{2\beta} \cdot (\log n)^{4\beta} \cdot (\log n)^{32\beta \sqrt{\log n}}} = \frac{1}{n^{o(1)}}$. By the standard Chernoff bound, we have
	   \[ \bigl|V(H)\bigr| \ge \Omega\bigl(|V(G_*)| \cdot k \cdot p\bigr) = n^{2-o(1)} \cdot n^{\lfloor \frac{t}{\ell} \rfloor -1} \cdot  \frac{1}{n^{o(1)}} = n^{\lfloor \frac{t}{\ell} \rfloor +1 - o(1)} \]
	with high probability. By \cref{cor:overlay}, all $K_s$-free subsets of size $n^{1+o(1)}$ are eliminated with high probability, so $\alpha_{s}(H) \le n^{1+o(1)}$. Now it suffices to prove that $H$ is \text{($\underbrace{K_b, \cdots, K_b}_t$)}-free.

	We first have that $G_i$ is \text{($\underbrace{K_b, \cdots, K_b}_\ell$)}-free for every $i \in \{0,1,\cdots,\beta\}$. Indeed, we just need to prove the $s$-partite sparsification $G_*$ is \text{($\underbrace{K_b, \cdots, K_b}_\ell$)}-free, since taking the blow-up will not change this property. Note that by our assumption $r_\ell(b) > s$, there is a coloring $\pi$ of $K_s$ with $\ell$ colors such that there is no monochromatic copy of $K_b$. Now, for every set of $s$ parts in $G_*$ belonging to the same clique $K_v$ in the original graph, we color all edges between parts $i$ and $j$ by color $\pi(i,j)$. Then, there is no monochromatic copy of $K_b$ within $G_*[K_v]$. Also, recall that by \cref{prop:EFR_constr} (iii), every copy of $K_b$, $b\ge 3$, is contained entirely in some $K_v$. Therefore, there is no monochromatic copy of $K_b$ in $G_*$.

	Now, we can use at most $(\beta+1)\cdot \ell \le t$ colors to color $G_0,\cdots, G_\beta$ such that they use disjoint sets of colors and there is no monochromatic copy of $K_b$ in any $G_i$. When overlaying these graphs, we can specify an arbitrary color for the joint edges and note that the resulting graph still does not contain any monochromatic copy of $K_b$, so $H$, an induced subgraph of the resulting graph, is \text{($\underbrace{K_b, \cdots, K_b}_t$)}-free. \qedhere
\end{proof}

\section{Recursive Lower Bounds} \label{sec:lower_bound}
In this section, we prove \cref{thm:lower_bound}, our recursive lower bound for the multicolor Erd\H{o}s--Rogers functions in the setting of triangle-free colorings.

Sudakov~\cite{Sud05b} established the recursive lower bound $f_{s,t}(n) \ge \Omega(n^{a'_t})$ for the classical Erd\H{o}s--Rogers function, where the exponent $a'_t$ satisfies the recurrence
\begin{equation} \label{eq:rec_formula_Sud}
	a'_t = 1 \quad \text{for } 1 \le t \le s, \quad \text{and} \quad \frac{1}{a'_t} = 1 + \frac{1}{s-1} \sum_{i = 1}^{s-1} \frac{1}{a'_{t-i}} \quad \text{for } t \ge s+1.
\end{equation}
In a subsequent work~\cite{Sud05a}, Sudakov refined the initial value $a'_{s+2}$, obtaining the lower bound $f_{s,s+2}(n) \ge \Omega(n^{\frac{3s-4}{6s-6}})$, and thereby improving $f_{s,t}(n)$ for all $t \ge s+2$. In a similar spirit, we establish a recursive lower bound, \cref{thm:lower_bound}, for the multicolor Erd\H{o}s--Rogers function $f_{K_s; K_3, \cdots, K_3}(n)$. We will first prove our recursive formula in \cref{pro:rec_formula} and then give the initial values of the exponent in Lemmas~\ref{lem:lay1},~\ref{lem:lay2}, and~\ref{lem:lay3}. \cref{thm:lower_bound} is a direct corollary of these results.

Recall that we defined the multicolor Ramsey number $r_{t} (b)$ and the local Ramsey number $r^{\text{loc}}_k(H)$ in \cref{sec:int}, and we let $g \cl \N \to \N$ be the inverse function of the local Ramsey number, that is, $g(i)$ is the smallest integer $k$ such that $r^{\text{loc}}_k(3) > i$.

\begin{proposition} \label{pro:rec_formula}
	Let $s,t$ be positive integers and $a_0,\cdots, a_{t-1}$ be a sequence of real numbers\footnote{In the recursive formula, the indices of the sequence $\{a_i\}$ are always non-negative. This is because $r^{\text{loc}}_t(3) \ge r_t(3) > s$, and hence the smallest $k$ such that $r^{\text{loc}}_k(3) > s$ is at most $t$. }. 
    If $r_t(3) > s$ and
	   \[ \ferj (n) \ge \Omega(n^{a_j}) \quad \textrm{for every $j < t$}, \]
	then
	   \[ \fert (n) \ge \Omega(n^{a_t}) \quad \textrm{where} \quad \frac{1}{a_t} = 1 + \frac{1}{s-1} \sum_{i = 2}^{s} \frac{1}{a_{t-g(i)}}. \]
\end{proposition}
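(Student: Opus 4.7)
The plan is to adapt Sudakov's iterative lower-bound scheme for the classical Erd\H{o}s--Rogers function to the multicolor setting, using the local Ramsey number to track how quickly the ``color budget'' is consumed along a greedy clique extension. I would proceed by induction on $t$: given a $(K_3,\ldots,K_3)$-free graph $G$ on $n$ vertices with $t$-coloring and $r_t(3)>s$, the goal is a $K_s$-free subset of size $\Omega(n^{a_t})$.

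First I would greedily construct a clique $v_1,\ldots,v_{s-1}$ in $G$ together with a nested sequence $X_0\supseteq X_1\supseteq\cdots\supseteq X_{s-1}$, where $X_j$ is a chosen partition class of the common neighborhood $\bigcap_{i\le j}N(v_i)$ defined by fixing the color vector $\vec{c}^{(j)}(u)=(c(v_1u),\ldots,c(v_ju))$ to some value. The structural fact underpinning this is: for $u,u'\in X_j$, if the edge $uu'$ had a color equal to one of the entries of $\vec{c}^{(j)}$, then together with the corresponding $v_i$ we would obtain a monochromatic triangle. Writing $d_j$ for the number of distinct entries of $\vec{c}^{(j)}$, the subgraph $G[X_j]$ therefore carries a $(K_3,\ldots,K_3)$-free coloring with at most $t-d_j$ colors.

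The key step is to arrange $d_j\ge g(j+1)$ at each stage so that applying the induction hypothesis to $G[X_j]$ (with at most $t-g(j+1)<t$ colors) yields $\alpha_s(G[X_j])\ge \Omega(|X_j|^{a_{t-g(j+1)}})$, matching the term that appears in the recursion. The main ingredient is the local Ramsey number: in the $(j+1)$-clique $\{v_1,\ldots,v_j,u\}$, which has no monochromatic triangle by assumption, the definition $r^{\text{loc}}_{g(j+1)-1}(3)\le j+1$ forces some vertex to have color-degree at least $g(j+1)$. By choosing $v_1,\ldots,v_j$ greedily so that their color-degrees within the growing clique stay small, one can force $u$ to be this high-color-degree vertex, yielding $d_j\ge g(j+1)$. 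The inequality $g(j+1)\le g(j)+1$, verifiable by extending any extremal local coloring on $K_j$ via a single fresh color at a new vertex, guarantees that this incremental growth can be sustained at every step of the iteration.

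Setting $m:=\Theta(n^{a_t})$ as the target, if at some step $|X_j|\ge m^{1/a_{t-g(j+1)}}$ we obtain a $K_s$-free set of size at least $m$ and are done. Otherwise all the $|X_j|$ fall below these thresholds, and combining this with quantitative estimates on the shrinkage $|X_{j-1}|\to|X_j|$ (picking $v_j$ to maximize its degree into the selected partition class, so that the class size is at least an average fraction of $|X_{j-1}\cap N(v_j)|$) together with the algebraic identity $\sum_{i=2}^{s}\tfrac{1}{a_{t-g(i)}}=(s-1)\bigl(\tfrac{1}{a_t}-1\bigr)$ that is built into the definition of $a_t$ produces a contradiction with $|X_0|=n$. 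The hard part will be the simultaneous bookkeeping at each step: $v_j$ must be chosen so that the selected partition class has both the required $d_j\ge g(j+1)$ and an adequate fraction of the preceding $|X_{j-1}\cap N(v_j)|$; the local-Ramsey monotonicity is the hinge that keeps this balanced greedy selection feasible throughout the $s-1$ iterations.
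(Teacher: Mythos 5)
Your proposal captures the high-level structure --- induction on $t$, a dichotomy based on whether a codegree-type quantity is large, and the role of $g$ as the inverse of the local Ramsey number --- and your Case~1 (some nested set $X_j$ is large, so $G[X_j]$ uses at most $t - g(j+1)$ colors and the inductive hypothesis applies) matches the paper's first branch up to presentational differences: the paper applies a single pigeonhole over color patterns at one index $i$ rather than building a greedy nested chain. The gap is in your Case~2. You claim that if every $|X_j|$ falls below its threshold, then tracking the shrinkage $|X_{j-1}|\to|X_j|$ together with the algebraic identity defining $a_t$ ``produces a contradiction with $|X_0|=n$.'' No such contradiction exists: in a very sparse graph $|X_1|$ can already be tiny while $|X_0|=n$, and nothing is violated. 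The correct move in that regime is not a contradiction but a counting-plus-deletion argument: the uniform upper bound on the number of valid extensions at each step yields $e(\mG)\le n\cdot\prod_{i=2}^s n^{\alpha/a_{t-g(i)}}$ for the $s$-uniform hypergraph $\mG$ of $K_s$-copies, and the hypergraph Tur\'an estimate (\Cref{lem:ind_set}) then produces an independent set of size $\Omega\bigl(n^{1+\frac{1}{s-1}}/e(\mG)^{\frac{1}{s-1}}\bigr)=\Omega(n^\alpha)$, i.e.\ a $K_s$-free subset of the desired size. Your sketch omits this deletion step entirely, so the second branch does not close.

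There is also a softer issue with the greedy color-degree bookkeeping. To make the counting bound work one needs a statement about \emph{every} copy of $K_s$: that it admits an ordering $v_1,\ldots,v_s$ such that, for each $i\ge 2$, the edges from $v_i$ back to $\{v_1,\ldots,v_{i-1}\}$ use at least $g(i)$ distinct colors (\Cref{lem:order}). Your plan of ``choosing $v_1,\ldots,v_j$ greedily so that their color-degrees stay small, forcing $u$ to be the high-color-degree vertex'' does not obviously achieve this, because the color-degree of each $v_j$ inside $\{v_1,\ldots,v_j,u\}$ depends on the choice of $u$: a vertex whose color-degree was small before adding $u$ may become the high-color-degree vertex afterwards, for some choices of $u$ and not others, so you cannot arrange the $v_j$'s once and for all. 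The monotonicity $g(j+1)\le g(j)+1$ you invoke (which is correct) does not repair this. The paper instead fixes, for each copy of $K_s$, an ordering minimizing a potential (the number of inversions of $\ell_\pi$) and reads off the required local-coloring property from the resulting monotonicity --- an extremal rather than greedy choice, and one that applies uniformly to all $K_s$'s, which is exactly what the counting needs.
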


Fix a graph $G$, and consider the associated $s$-uniform hypergraph $\mG$, whose hyperedges correspond to the copies of $K_s$ in $G$. Our goal thus reduces to finding a large independent set in $\mG$. To this end, we need the following well-known Tur\'an-type bound on the size of the largest independent set in a $k$-uniform hypergraph. This estimate was also employed in~\cite{Sud05a, Sud05b}. It was established via the alteration method: we select each vertex independently with probability $\Theta((n/m)^{k-1})$, and then remove a vertex from each copy of $K_s$ in the resulting graph.

\begin{lemma} \label{lem:ind_set}
	Let $\mG$ be a $k$-uniform graph with $n$ vertices and $m$ edges. Then $\mG$ contains an independent set of size $\Omega \big(n^{1+\frac{1}{k-1}} \big/ m^{\frac{1}{k-1}}\big)$.
\end{lemma}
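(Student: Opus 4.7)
The plan is to apply the standard alteration (deletion) method, as hinted in the paragraph preceding the lemma. First I would include each vertex of $\mG$ independently with probability $p \in (0,1]$, to be optimized later, obtaining a random vertex subset $V'$. Writing $X$ for the number of hyperedges of $\mG$ entirely contained in $V'$, linearity of expectation gives $\E|V'| = pn$ and $\E X = p^k m$, since each hyperedge, having exactly $k$ vertices, survives the sampling with probability $p^k$.

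Next, I would form an independent set $I$ by removing one vertex from each surviving hyperedge, so $I \subseteq V'$ contains no edge of $\mG$ and $|I| \ge |V'| - X$. Consequently $\E|I| \ge pn - p^k m$. Optimizing this lower bound over $p$, I would set $p = (n/(km))^{1/(k-1)}$, the unique stationary point of the function $p \mapsto pn - p^k m$ on $(0,\infty)$. With this choice one has $p^k m = pn/k$, and therefore
\[ \E|I| \ge \l(1 - \f{1}{k}\r) pn = \Omega\!\l(\f{n^{1+\frac{1}{k-1}}}{m^{\frac{1}{k-1}}}\r). \]
By the probabilistic method, some outcome attains at least the expectation, which yields the desired independent set.

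The only technicality to address is ensuring that the optimal value $p = (n/(km))^{1/(k-1)}$ lies in $(0,1]$, which amounts to $m \ge n/k$. In the complementary regime $m < n/k$, the greedy strategy of deleting one vertex per hyperedge from $V(\mG)$ already produces an independent set of size at least $n - m = \Omega(n)$, and a direct comparison shows that $n$ dominates the target $n^{1+1/(k-1)}/m^{1/(k-1)}$ when $m \le n$; hence this boundary case is routine. Since the whole argument is a classical application of the first moment method with a one-line optimization, there is no substantive obstacle to speak of, and the main care is just in recording the optimizing value of $p$ and the trivial regime.
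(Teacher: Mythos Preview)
Your alteration argument is exactly the one the paper sketches in the paragraph preceding the lemma (the paper does not give a formal proof), so the core of your proposal is correct and matches the intended approach.

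One small slip in your boundary-case remark: when $m \le n$ the inequality goes the other way, since
\[
\frac{n^{1+\frac{1}{k-1}}}{m^{\frac{1}{k-1}}} = n\cdot\Bigl(\frac{n}{m}\Bigr)^{\frac{1}{k-1}} \ge n,
\]
so $n$ does \emph{not} dominate the target. In fact the lemma as stated cannot literally hold when $m < n/k$ (the asserted bound would exceed $n$); the paper only invokes it in regimes where $m$ is at least polynomially larger than $n$, so one may simply assume $m \ge n/k$ (equivalently, take $p = \min\{1,(n/(km))^{1/(k-1)}\}$). This does not affect the substance of your argument.
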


Trivially, the number of copies of $K_s$ in $G$ is bounded by the product of codegrees:
\begin{equation} \label{eq:triv_prod_bound}
	e(\mG) \le n \cdot \Delta_1(\mG) \cdot \Delta_2(\mG) \cdots \Delta_{s-1}(\mG).
\end{equation}
If $G$ does not contain a large $K_s$-free subset, we can bound each term on the right-hand side of~\eqref{eq:triv_prod_bound} by induction on the number of colors. In combination with \Cref{lem:ind_set}, this yields a dichotomy for the size of the largest $K_s$-free subset in $G$.
This approach was adopted in~\cite{Sud05b} in the study of uncolored Erd\H{o}s--Rogers function, leading to the recursive formula~\eqref{eq:rec_formula_Sud}. In the multicolor setting, the following lemma provides a way to label the vertices in each copy of $K_s$ in $G$ such that, when constructing the clique according to this ordering, the number of choices at each step is bounded. This leads to a sharper estimate than~\eqref{eq:triv_prod_bound}.

\begin{lemma} \label{lem:order}
	Let $s$ be an integer, and suppose the edges of $K_s$ are colored such that there is no monochromatic triangle. Then there exists an ordering of the vertices $v_1, v_2, \cdots, v_s$ of $K_s$ such that for each $i = 2,3, \cdots, s$, the set of edges $\{v_1v_i, v_2v_i, \cdots, v_{i-1}v_i\}$ uses at least $g(i)$ distinct colors, where $g$ is the inverse function of the local Ramsey number.
\end{lemma}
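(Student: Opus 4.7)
The plan is to build the ordering in reverse, choosing $v_s$ first, then $v_{s-1}$, and so on down to $v_2$. At each stage $i$ (from $i = s$ down to $i = 2$) we will be working with a coloring of some $K_i$ which, being an induced subcoloring of the original, still contains no monochromatic triangle; we need to locate inside it a single vertex incident to at least $g(i)$ distinct colors, declare that vertex to be $v_i$, and then iterate on the remaining $K_{i-1}$.

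The key observation, and really the entire content of the lemma, is that such a vertex must exist. Suppose for contradiction that in the current $K_i$ every vertex is incident to at most $g(i) - 1$ distinct colors; then by definition the coloring is $(g(i)-1)$-local on $K_i$. But by the minimality in the definition of $g$, we have $r^{\text{loc}}_{g(i)-1}(3) \le i$, which means every $(g(i)-1)$-local coloring of $K_i$ contains a monochromatic triangle, contradicting the triangle-freeness of the induced coloring. Hence a vertex with at least $g(i)$ distinct incident colors exists, and we set it to be $v_i$.

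Once $v_i$ is chosen and removed, the remaining $K_{i-1}$ is an induced subgraph with its induced coloring, which is still triangle-free, so the procedure continues. The base case $i = 2$ is vacuous since $g(2) = 1$ and a single edge trivially uses one color. Tracing through the construction in the forward direction $v_1, v_2, \ldots, v_s$: for each $i$, the edges $\{v_1 v_i, v_2 v_i, \ldots, v_{i-1} v_i\}$ are precisely the edges from $v_i$ to the vertices that survived longer in the peeling, which were exactly the edges witnessing the $g(i)$ distinct colors at the moment $v_i$ was selected. This gives the required property.

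The only subtle point to keep straight is the direction of the induction and the fact that the induced coloring on any vertex subset remains triangle-free (this is automatic) — there is no real obstacle. The lemma is really just a repackaging of the definition of $r^{\text{loc}}_k(3)$ applied iteratively.
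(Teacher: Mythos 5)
Your proof is correct, and it takes a genuinely different route from the paper's. The paper selects an ordering $\pi$ minimizing the number of inversions of the sequence $\ell_\pi(i) \coloneqq |\{\text{colors among } v_jv_i,\, j<i\}|$, argues via an adjacent-transposition exchange that this makes $\ell_\pi$ non-decreasing, and then claims the restriction of the coloring to $\{v_1,\dots,v_i\}$ is $\ell_\pi(i)$-local, whence $\ell_\pi(i)\ge g(i)$. Your argument instead builds the ordering by reverse greedy peeling: for the current induced $K_i$ you invoke the local Ramsey bound directly --- if every vertex of $K_i$ saw at most $g(i)-1$ colors, the coloring would be $(g(i)-1)$-local and $r^{\text{loc}}_{g(i)-1}(3)\le i$ would force a monochromatic triangle --- and you set $v_i$ to be a vertex witnessing $\ge g(i)$ incident colors, then recurse on $K_{i-1}$. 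The reverse-greedy formulation is more self-contained: it uses the local-Ramsey definition exactly once per step, applied to the vertex that actually becomes $v_i$, rather than needing to argue that \emph{every} vertex of $\{v_1,\dots,v_i\}$ is incident to at most $\ell_\pi(i)$ colors (the least transparent step of the paper's argument, since $\ell_\pi(j)\le\ell_\pi(i)$ only controls the edges from $v_j$ to \emph{earlier} vertices, not the edges $v_jv_k$ with $j<k\le i$). Your version is the cleaner proof of the two; the one subtlety you correctly rely on, and which is worth stating explicitly, is the monotonicity of $r^{\text{loc}}_k(3)$ in $n$ --- that any $k$-local coloring of $K_n$ with $n\ge r^{\text{loc}}_k(3)$ still contains a monochromatic triangle, which follows by restricting to any $r^{\text{loc}}_k(3)$ vertices.
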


\begin{proof}
	For every ordering $\pi$ of the vertices $v_1,v_2,\cdots, v_s$ of $K_s$ and every integer $i \ge 2$, let $\ell_\pi(i)$ be the number of distinct colors among the edges $\{v_jv_i \cl j < i\}$ and $n_\pi$ be the number of the pairs of indices $(i,j)$ such that $i<j$ and $\ell_\pi(i) > \ell_\pi(j)$.

	Now fix $\pi$ to be the ordering which minimizes $n_\pi$. We first claim that $n_\pi = 0$, which implies that $\ell_\pi$ is non-decreasing. If not, then there is an index $2\le i \le s-1$ such that $\ell_\pi(i) > \ell_\pi(i+1)$. We swap the positions of $v_i$ and $v_{i+1}$ and get a new ordering $\pi'$. Then by definition, we have $n_{\pi'} < n_{\pi}$, a contradiction.

	We now claim that $\pi$ satisfies the desired condition in the lemma. For each index $i$, consider the subgraph induced by $\{v_1, v_2, \cdots, v_i\}$. Since $\ell_\pi$ is non-decreasing, the coloring restricted to this subgraph is an $\ell_\pi(i)$-local coloring, that is, each vertex is incident to edges of at most $\ell_\pi(i)$ distinct colors. As the original coloring contains no monochromatic triangle, it follows from the definition of $g(i)$ that $\ell_\pi(i) \ge g(i)$, completing the proof.
\end{proof}

\begin{proof}[Proof of \Cref{pro:rec_formula}]
	Suppose $r_t(3) > s$, and define $\alpha$ by
	   \[ \frac{1}{\alpha} = 1 + \frac{1}{s-1} \sum_{i = 2}^{s} \frac{1}{a_{t-g(i)}}. \]

	If for some $i \in \{2, \cdots, s\}$, there exists $i-1$ distinct vertices $v_1, \cdots, v_{i-1}$ such that
	\begin{equation} \label{eq:rec_fml_eq1}
		\Big| \big\{ u \in V(G) \cl uv_1, \cdots, uv_{i-1} \in E(G) \text{  and have at least $g(i)$ distinct color} \big\} \Big| \ge n^{\alpha / a_{t-g(i)}},
	\end{equation}
	then by the pigeonhole principle, there exists a color pattern $(c_1, \cdots, c_{i-1}) \in [t]^{i-1}$ involving at least $g(i)$ distinct colors such that
	   \[ \Big| \big\{ u \in V(G) \cl uv_j \in E(G) \text{ has color } c_j \text{ for every } j \in [i-1] \big\} \Big| \ge \frac{1}{t^{i-1}} \cdot n^{\alpha / a_{t - g(i)}}. \]
	Within these vertices, none of the edges can use any of the colors $c_j$, as that would form a monochromatic triangle. Hence, this subset spans edges using at most $t-g(i)$ colors. By the induction hypothesis, it contains a $K_s$-free subset of size $\Omega \left( \left( \frac{1}{t^{i-1}} \cdot n^{\alpha / a_{t-g(i)}} \right)^{a_{t-g(i)}} \right) = \Omega(n^\alpha)$.

	Suppose \eqref{eq:rec_fml_eq1} does not hold for any $i$ and any set of $i-1$ vertices $v_1, \cdots, v_{i-1}$.
	By \Cref{lem:order}, each copy of $K_s$ in $G$ can be constructed via an ordering $v_1, \cdots, v_s$ such that each $v_i$ belongs to the set described in~\eqref{eq:rec_fml_eq1}. Let $\mG$ be the $s$-uniform hypergraph whose hyperedges are the copies of $K_s$ in $G$. We have that the number of hyperedges in $\mG$ is at most
	   \[ m \le n \cdot \prod_{i = 2}^s n^{\alpha / a_{t-g(i)}}. \]
	By \Cref{lem:ind_set}, the hypergraph $\mG$ contains an independent set of size
	   \[ \Omega \left(n^{1+\frac{1}{s-1}} \big/ m^{\frac{1}{s-1}}\right) = \Omega \left( n / n^{\frac{1}{s-1} \sum_{i = 2}^{s} \alpha / a_{t-g(i)}} \right) = \Omega(n^\alpha), \]
	so $\alpha_s(G) = \Omega(n^{\alpha})$.
\end{proof}

In the following lemmas, we establish the initial values of the exponent in \cref{thm:lower_bound}.
\begin{lemma} \label{lem:lay1}
	For integers $s \ge 2$ and $t \ge 1$ such that $r_t(3) \le s$, if a graph $G$ is $(\underbrace{K_3,\cdots, K_3}_t)$-free, then $G$ is $K_s$-free. Hence, we have
	   \[ \fert (n) = n. \]
\end{lemma}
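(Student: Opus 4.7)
The plan is to unfold the definitions of the multicolor Ramsey number and of $(K_3,\ldots,K_3)$-freeness, and derive the $K_s$-free conclusion by a direct restriction argument. By hypothesis, there is a $t$-edge-coloring $\chi\colon E(G)\to[t]$ in which each color class is triangle-free. I would argue by contradiction: suppose $G$ contains a copy of $K_s$ on some vertex set $S\subseteq V(G)$. Restricting $\chi$ to $E(G[S])=E(K_s)$ yields a $t$-edge-coloring of $K_s$ with no monochromatic triangle, since any such monochromatic triangle would already be a monochromatic triangle in the original coloring of $G$.

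Next I would invoke the definition of the multicolor Ramsey number: $r_t(3)$ is the smallest integer $N$ such that every $t$-edge-coloring of $K_N$ contains a monochromatic $K_3$. Since we assume $s\ge r_t(3)$, every $t$-edge-coloring of $K_s$ must contain a monochromatic triangle. This contradicts the conclusion of the previous paragraph, so $G$ contains no copy of $K_s$, i.e.\ $G$ is $K_s$-free.

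For the numerical statement, observe that once $G$ is $K_s$-free, the entire vertex set $V(G)$ is itself $K_s$-free, so $\alpha_s(G)=n$. Taking the minimum over all $(\underbrace{K_3,\ldots,K_3}_t)$-free $n$-vertex graphs gives $\fert(n)\ge n$, and the reverse inequality $\fert(n)\le n$ is immediate from the trivial bound $\alpha_s(G)\le|V(G)|$. Therefore $\fert(n)=n$.

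I do not anticipate any obstacle here; the lemma is essentially a reformulation of the definition of $r_t(3)$, and the only thing to be careful about is to explicitly note that the restriction of a valid coloring of $G$ to any induced subgraph is still a valid coloring with the same triangle-freeness property per color class.
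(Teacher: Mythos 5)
Your proposal is correct and matches the paper's own argument: both argue by contradiction, restricting the triangle-free-per-color edge-coloring of $G$ to a hypothetical $K_s$ and invoking the definition of $r_t(3)$ to produce a monochromatic triangle, then noting $\alpha_s(G)=n$ for $K_s$-free $G$. Your write-up simply spells out the restriction step and the trivial upper bound $\fert(n)\le n$ more explicitly than the paper does.
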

\begin{proof}
	Suppose for contradiction that $G$ contains a copy of $K_s$. By the definition of $r_t(3)$, we have that in every $t$-edge coloring of $G$, this copy of $K_s$ contains a monochromatic triangle, a contradiction.
\end{proof}

\begin{lemma} \label{lem:lay2}
	For integers $s \ge 2$ and $t \ge 1$ such that $r_{t-1}(3) \le s < r_t(3)$, we have
	   \[ \fert (n) \ge \Omega(n^{1/2}). \]
\end{lemma}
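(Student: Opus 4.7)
The plan is a simple dichotomy based on the largest monochromatic color-neighborhood. Fix a $t$-edge-coloring of $G$ witnessing that it is $(K_3,\ldots,K_3)$-free, and for each vertex $v$ and color $c\in [t]$ write $N_c(v)$ for the set of neighbors of $v$ joined to it by a color-$c$ edge. The key observation is that the induced graph $G[N_c(v)]$ contains no edge of color $c$, since such an edge together with $v$ would form a monochromatic triangle. Hence $G[N_c(v)]$ inherits a coloring using only the remaining $t-1$ colors, and it still contains no monochromatic triangle.

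First I would handle the case that some $|N_c(v)|\ge \sqrt{n}$. The hypothesis $r_{t-1}(3)\le s$ says that in every $(t-1)$-edge-coloring of $K_s$ there is a monochromatic triangle. Consequently, the $(t-1)$-colored and monochromatic-triangle-free graph $G[N_c(v)]$ cannot contain a copy of $K_s$, so $N_c(v)$ is a $K_s$-free subset of $V(G)$ of size at least $\sqrt{n}$. This immediately yields $\alpha_s(G)\ge \sqrt{n}$.

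Otherwise, $|N_c(v)|<\sqrt{n}$ for every $v$ and every $c$. Then
\[
\deg_G(v)=\sum_{c=1}^{t}|N_c(v)|< t\sqrt{n},
\]
so $G$ has maximum degree less than $t\sqrt{n}$. A greedy argument (or Caro--Wei) produces an independent set of size at least $n/(t\sqrt{n})=\sqrt{n}/t$, which is in particular $K_s$-free, giving $\alpha_s(G)=\Omega(\sqrt{n})$. Combining the two cases finishes the proof.

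There is no real obstacle here; the only thing to be slightly careful about is that we are using a proper edge-coloring so that the decomposition $\deg_G(v)=\sum_c |N_c(v)|$ has no double counting, and that the constant hidden in the $\Omega$ is allowed to depend on $t$ (equivalently, on $s$, since $t$ is determined by $s$ via $r_{t-1}(3)\le s<r_t(3)$).
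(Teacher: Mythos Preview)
Your proof is correct and takes a genuinely more elementary route than the paper's.

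The paper derives this lemma by plugging into the general recursive machinery: since $r_{t-1}(3)\le s$, \cref{lem:lay1} gives $a_{t-g(i)}=1$ for every $i\in\{2,\ldots,s\}$ (as $g(i)\ge 1$), and then \cref{pro:rec_formula} yields $1/a_t = 1 + \frac{1}{s-1}\sum_{i=2}^s 1 = 2$. That proposition in turn rests on the ordering \cref{lem:order} and the hypergraph Tur\'an bound \cref{lem:ind_set}. Your argument bypasses all of this with the classical monochromatic-neighborhood dichotomy: either some $N_c(v)$ is large, and then the hypothesis $r_{t-1}(3)\le s$ makes it $K_s$-free directly; or every $N_c(v)$ is small, bounding the maximum degree and giving a large independent set greedily. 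This is exactly the multicolor analogue of the standard proof that a triangle-free graph on $n$ vertices has independence number at least $\sqrt{n}$. The paper's approach has the advantage of uniformity (one recursion handles all layers), while yours is self-contained and avoids the local-Ramsey and ordering apparatus entirely for this base case.

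One terminological quibble: ``proper edge-coloring'' normally means adjacent edges receive different colors. What you actually use is just that each edge receives exactly one color, so that the sets $N_c(v)$ partition $N(v)$ and $\deg_G(v)=\sum_c |N_c(v)|$ holds without overcount; I would phrase it that way.
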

\begin{proof}
	By \cref{pro:rec_formula} and \cref{lem:lay1}, we have
	   \[ 1 + \frac{1}{s-1} \sum_{i = 2}^{s} \frac{1}{a_{t-g(i)}} = 1 + \frac{1}{s-1} \sum_{i = 2}^{s} 1 = 2, \]
	which verifies our claim.
\end{proof}

For the next level in the Ramsey hierarchy, namely when $r_{t-2}(3) \le s < r_{t-1}(3)$, we have the following lower bound, which is sharper than trivially using \cref{pro:rec_formula} and Lemmas~\ref{lem:lay1} and~\ref{lem:lay2}.
\begin{lemma} \label{lem:lay3}
	For integers $s \ge 3$ and $t \ge 1$ such that $r_{t-2}(3) \le s < r_{t-1}(3)$, we have
	   \[ \fert \ge \Omega\Big(n^{\frac{s + \lceil \frac{s}{2} \rceil - 3}{2s + 2 \lceil \frac{s}{2} \rceil - 5}} \Big). \]
\end{lemma}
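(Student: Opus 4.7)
The plan is to refine the recursive strategy of \cref{pro:rec_formula} by exploiting the extra structure available in the regime $r_{t-2}(3)\le s<r_{t-1}(3)$, where \cref{lem:lay1,lem:lay2} already give $a_{t-1}=\tfrac12$ and $a_{t-j}=1$ for $j\ge 2$. In particular, any $(t-2)$-colored subgraph of $G$ is automatically $K_s$-free. The target exponent can be rewritten as $\tfrac{1}{a_t}=2+\tfrac{1}{s+k-3}$ with $k=\lceil s/2\rceil$; one checks that for $k=2$ (i.e.\ $s\in\{3,4\}$) this matches the bound produced by \cref{pro:rec_formula} with the initial values of \cref{lem:lay1,lem:lay2}, so a genuinely new argument is required only for $s\ge 5$. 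The strategy is a dichotomy: a color-partition neighborhood bound gives the ``$2$'' in $2+\tfrac{1}{s+k-3}$, while a balanced clique-counting refinement anchored at a base clique of size $k$ produces the additional $\tfrac{1}{s+k-3}$.

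First, I would establish a neighborhood structural bound. For any $v\in V(G)$, partition $N(v)$ into color classes $N_\gamma(v)=\{u:c(uv)=\gamma\}$; a color-$\gamma$ edge inside $N_\gamma(v)$ would complete a monochromatic triangle with $v$, so each $N_\gamma(v)$ is $(K_3,\ldots,K_3)$-free on only $t-1$ colors. \Cref{lem:lay2} then gives $\alpha_s(N_\gamma(v))\ge\Omega(|N_\gamma(v)|^{1/2})$, and averaging over the at most $t$ classes yields $\alpha_s(N(v))\ge\Omega(|N(v)|^{1/2})$. Consequently, if any vertex has degree $\ge n^{2a_t}$ the lemma follows, and we may assume $\Delta(G)<n^{2a_t}$. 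Iterating the same color-partition argument on $N(S)$ for a $K_j$-base $S$ with $j\ge 2$, any ``fiber'' of the color vector of edges to $S$ that uses $\ge g(j+1)\ge 2$ distinct colors is $(K_3,\ldots,K_3)$-free on only $t-2$ colors, and is therefore $K_s$-free by \cref{lem:lay1}.

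Second, I would count copies of $K_s$ by splitting each into a base $K_k$ and a $K_{s-k}$-extension inside its common neighborhood, with $k=\lceil s/2\rceil$. The max-degree bound from the previous step, combined with iterated codegree dichotomies at levels $2,\ldots,k$ (obtained by applying the same color-fiber argument inside $N(S)$ for smaller bases $S$), caps the number of base cliques $T_k$, and the fiber analysis caps the number of extensions per base. Plugging the resulting total $T$ into \cref{lem:ind_set}'s alteration estimate $\alpha_s(G)\ge\Omega(n^{s/(s-1)}T^{-1/(s-1)})$ and equating the two exponents produces $a_t=\tfrac{s+k-3}{2s+2k-5}$; the choice $k=\lceil s/2\rceil$ is forced as the unique balancing point of the base-counting and extension-counting bounds.

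The main obstacle will be handling the ``poor'' color vectors—extensions whose edges to the $K_j$-base use fewer than $g(j+1)$ distinct colors—since the color-partition reduction does not collapse to the $K_s$-free case there. These must be controlled via the \cref{lem:order}-style orderings used in \cref{pro:rec_formula}, and balancing the contribution of the ``rich'' and ``poor'' vectors is precisely what produces the fractional improvement $\tfrac{1}{s+k-3}$ and pins down $k=\lceil s/2\rceil$ as optimal.
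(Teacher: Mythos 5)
Your high-level plan correctly identifies the key features of the target bound — the rewriting $1/a_t = 2 + \tfrac{1}{s+k-3}$ with $k=\lceil s/2\rceil$, the fact that the dense case reduces to two fixed colors and hence only $t-2$ remaining colors, and the role of a base clique of size $k$ in the sparse case. The observation that each color class $N_\gamma(v)$ inside a neighborhood spans edges of only $t-1$ colors is also sound. However, there are genuine gaps between your sketch and an argument that actually closes.

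First, your dichotomy is driven by the maximum degree $\Delta(G)$, but this is too coarse. The paper's dichotomy is on the quantity $\E_x\big(\dst(x)\cdot\dnd(x)\big)$, the product of the two largest color-class sizes in each vertex neighborhood, with threshold $n^{\alpha+1}$. This is what produces the $K_s$-free set of size $n^\alpha$ in the dense case: when $\dst(x)\dnd(x)$ is often large one finds two vertices $y,z$ and a fixed color pair (red, blue) such that $\{x: xy \text{ red},\, xz \text{ blue}\}$ is large; this set spans at most $t-2$ colors and is thus $K_s$-free by $r_{t-2}(3)\le s$. Your max-degree threshold $\Delta(G)<n^{2a_t}$ is strictly weaker, and a quick check shows it cannot feed the clique-counting step: with $\Delta<n^{2\alpha}$ the trivial bound $e(\mG)\le n\Delta^{s-1}$ and \cref{lem:ind_set} only yield an independent set of size $n^{1-2\alpha}$, which fails for all $s\ge 3$ since $\alpha > 1/3$.

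Second, your counting step — splitting each $K_s$ multiplicatively into $T_k$ base $K_k$'s times extensions per base, then applying the single-uniformity alteration bound of \cref{lem:ind_set} — does not match what is needed. The paper instead constructs \emph{two} hypergraph families of different uniformities: an $s$-uniform family $\mF$ and a $\lceil s/2\rceil$-uniform family $\mG$, together covering all copies of $K_s$ in the sense that each $K_s$ either lies in $\mF$ or contains a member of $\mG$. One then applies a two-uniformity version of the alteration bound (\cref{lem:new_ind_set}), not \cref{lem:ind_set}. The decision of whether a given copy of $K_s$ is charged to $\mF$ or $\mG$ is made per-vertex using a threshold $\ell$ calibrated against $\xi=\dst(x)\dnd(x)$, and relies on a structural claim that every copy of $K_k$ contains an ordered sequence $v_1,\ldots,v_{\lceil k/2\rceil}$ in which $v_1v_2$ avoids the most frequent color at $v_1$ and each later vertex sees at least two colors back to the prefix. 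You gesture at ``iterated codegree dichotomies'' and ``balancing rich and poor vectors,'' but this is precisely the part that requires a concrete mechanism, and it is missing from your sketch. Without the $\dst\dnd$ dichotomy, the dual-uniformity families, and the ordering claim, the counting does not balance to give the exponent $\tfrac{s+\lceil s/2\rceil-3}{2s+2\lceil s/2\rceil-5}$.
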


We first need the following lemma, which extends \Cref{lem:ind_set} to the setting of hypergraphs with different uniformity.
\begin{lemma} \label{lem:new_ind_set}
	Let $s, s' \ge 2$ be integers, and let $\mF$ and $\mG$ be $s$-uniform and $s'$-uniform hypergraphs on $[n]$, respectively. Then there exists a subset $I \subseteq [n]$ of size $\Omega \left( \min \left\{ \frac{n^{s/(s-1)}}{|\mF|^{1/(s-1)}}, \frac{n^{s'/(s'-1)}}{|\mG|^{1/(s'-1)}} \right\} \right)$ such that $I$ is an independent set in both $\mF$ and $\mG$.
\end{lemma}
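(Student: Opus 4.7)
The plan is to extend the standard alteration argument that proves \cref{lem:ind_set} so that it handles two hypergraphs of different uniformities at the same time. Concretely, I would pick each vertex of $[n]$ independently with a common probability $p$, then destroy every edge (of either $\mF$ or $\mG$) that survives by deleting one of its vertices, and finally optimize $p$.

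First, let $S \subseteq [n]$ be the random subset obtained by retaining each vertex independently with probability $p$, where $p \in (0,1]$ will be chosen below. Then $\E[|S|] = pn$, while the expected number of edges of $\mF$ contained in $S$ is $p^s |\mF|$ and the expected number of edges of $\mG$ contained in $S$ is $p^{s'} |\mG|$. Deleting one vertex from each such surviving edge yields a set $I \subseteq S$ that is independent in both $\mF$ and $\mG$, and
\[
    \E[|I|] \ge pn - p^s|\mF| - p^{s'}|\mG|.
\]

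Next, I would choose $p$ to balance the three terms. Setting
\[
    p \ce \min\!\l\{1,\; \l(\tfrac{n}{3|\mF|}\r)^{1/(s-1)},\; \l(\tfrac{n}{3|\mG|}\r)^{1/(s'-1)}\r\},
\]
one checks that $p^s|\mF| \le pn/3$ and $p^{s'}|\mG| \le pn/3$, so $\E[|I|] \ge pn/3$. In the non-trivial regime $p<1$, this gives
\[
    \E[|I|] \ge \tfrac{1}{3} \min\!\l\{ \tfrac{n^{s/(s-1)}}{(3|\mF|)^{1/(s-1)}},\; \tfrac{n^{s'/(s'-1)}}{(3|\mG|)^{1/(s'-1)}} \r\},
\]
which is the desired bound up to the absolute constant absorbed by the $\Omega$; in the regime $p=1$ the claimed lower bound is $O(n)$ and hence trivially satisfied by taking $I=[n]$ after removing one vertex per edge. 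Fixing an outcome of the random experiment that meets the expectation completes the proof.

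There is no real obstacle here: the argument is the same alteration method used for \cref{lem:ind_set}, just applied to the union of the two families of ``bad'' edges. The only minor point worth stating carefully is the choice of $p$ as the minimum of the two natural scales together with the $\le 1$ truncation, and the observation that each of the two ``cost'' terms $p^s|\mF|$ and $p^{s'}|\mG|$ is then controlled by $pn/3$.
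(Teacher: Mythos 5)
Your proof is correct and follows essentially the same alteration argument as the paper: sample each vertex with probability $p$ chosen to balance the linear term against both cost terms, then delete one vertex per surviving edge. The only cosmetic difference is that the paper places the constant $1/3$ outside the minimum while you place $3$ inside the parentheses, and you explicitly truncate $p$ at $1$, which the paper leaves implicit.
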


\begin{proof}
	Let $p = \frac{1}{3} \cdot \min \left\{ \frac{n^{1/(s-1)}}{|\mF|^{1/(s-1)}}, \frac{n^{1/(s'-1)}}{|\mG|^{1/(s'-1)}} \right\}$. Select each vertex independently with probability $p$, and then remove one vertex from each edge of $\mF$ or $\mG$ that is entirely contained in the selected set. The expected number of selected vertices is $pn$, and the expected number of edges of $\mF$ and $\mG$ that are fully contained in the selected set is $e(\mF) \cdot p^s$ and $e(\mG) \cdot p^{s'}$, respectively.

	Therefore, the expected number of remaining vertices, which form an independent set in both $\mF$ and $\mG$, is at least:
	\begin{align*}
		pn - e(\mF) \cdot p^s - e(\mG) \cdot p^{s'} & \ge pn - e(\mF) \cdot \frac{n^{s/(s-1)}}{3^s \cdot e(\mF)^{s/(s-1)}}  - e(\mG) \cdot \frac{n^{s'/(s'-1)}}{3^{s'} \cdot e(\mG)^{s'/(s'-1)}} \\
		                                            & \ge pn - \frac{pn}{3} - \frac{pn}{3}
		\ge \frac{pn}{3}. \qedhere
	\end{align*}
\end{proof}

\begin{proof}[Proof of \cref{lem:lay3}]
	For each edge $xy$,
	we say that $y$ is a neighbor of $x$ of the color $c$ if the edge $xy$ has color $c$. For each vertex $x$, let $\dst(x)$ and $\dnd(x)$ denote the sizes of the largest and the second largest color classes in the neighborhood of $x$, respectively.
	Define $\alpha \ce \frac{s+\lceil \frac{s}{2} \rceil - 3}{2s + 2\lceil \frac{s}{2} \rceil - 5}$ and $\beta \ce \alpha + 1$.

	If $\mathbb{E}_{x \in V(G)} (\dst(x) \cdot \dnd(x)) \ge n^\beta$, there are at least $n \cdot n^\beta$ triples $(x,y,z)$ such that $xy$ and $xz$ are connected and of different colors. Since there are at most $t(t-1)$ possible color pairs for $(xy, xz)$, one such pair must occur in at least $\frac{n^{1+\beta}}{t(t-1)}$ times. Without loss of generality, assume that the color pair $(\text{red}, \text{blue})$ appears most frequently.

	By averaging over all $(y,z)$ pairs, there exist distinct vertices $y \neq z$ such that
	   \[ \Big| \big\{ x \in V(G) \cl xy \text{ is red and } xz \text{ is blue} \big\} \Big| \ge \frac{n^{1+\beta}}{t(t-1)n^2} = \frac{n^{\alpha}}{t(t-1)}. \]
	Since each color is triangle-free, the induced subgraph on this set contains no edges of color red and blue. Hence, this set only spans edges of the remaining $t-2$ colors. Given that $r_{t-2}(3) \le s$, this implies the above set is $K_s$-free.

	Thus, we may assume $\mathbb{E}_{x \in V(G)} (\dst(x) \cdot \dnd(x)) < n^\beta$. We now construct an $s$-uniform set family $\mF$ and an $\lceil \frac{s}{2} \rceil$-uniform set family $\mG$ such that every copy of $K_s$ in $G$ either lies in $\mF$, or contains some member of $\mG$ as a subset. We need the following claim.

	\begin{claim}
		Let $k \ge 2$ be an integer. In any copy of $K_k$ in $G$, there exists $\lceil \frac{k}{2} \rceil$ vertices $v_1, v_2, \cdots, v_{\lceil \frac{k}{2} \rceil}$ such that:
		\vspace{-1ex}\begin{enumerate}\setlength\itemsep{-.8ex}
			\item The edge $v_1v_2$ does not have the most frequent color among the edges incident to $v_1$.
			\item For each $i = 3, \cdots, \lceil \frac{k}{2} \rceil$, the set of edges $\{v_1v_i, \cdots, v_{i-1}v_i\}$ contains at least two distinct colors.
		\end{enumerate}
	\end{claim}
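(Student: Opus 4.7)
The plan is to establish the two properties independently: property~(1) by a direct monochromatic-triangle argument, and property~(2) by invoking the swap procedure from \Cref{lem:order}. The case $k \le 2$ is vacuous since only $v_1$ is required and no conditions apply, so I assume $k \ge 3$ from here on.

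For property~(1), I would argue by contradiction: suppose no pair $v_1, v_2 \in V(K_k)$ has the property that $v_1 v_2$ avoids $v_1$'s most frequent colour $c^*(v_1)$ in $G$. Then for every $v \in V(K_k)$, each of its incident $K_k$-edges is coloured in $c^*(v)$. Applying this to both endpoints of any $K_k$-edge $uv$ forces $c^*(u) = c^*(v)$, so all vertices of $K_k$ share a common majority colour $c^*$ and every $K_k$-edge is coloured $c^*$. Since $k \ge 3$, this yields a monochromatic triangle, contradicting the $(K_3,\ldots,K_3)$-freeness of $G$.

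For property~(2), which is non-vacuous only when $k \ge 5$, I would apply \Cref{lem:order} to $K_k$ but with the chosen $v_1, v_2$ pinned at positions~1 and~2. The key observation is that the swap step in the proof of \Cref{lem:order}, which exchanges $v_i$ with $v_{i+1}$ whenever $\ell_\pi(i) > \ell_\pi(i+1)$, can never be triggered at $i = 2$: one always has $\ell_\pi(2) = 1 \le \ell_\pi(3)$. All other swaps act on positions $i \ge 3$ and leave the pinned pair in place. Minimising $n_\pi$ over orderings of $V(K_k)$ with $v_1, v_2$ pinned therefore produces an ordering with $\ell_\pi$ non-decreasing, and the local-Ramsey argument from \Cref{lem:order} then yields $\ell_\pi(i) \ge g(i)$ for every $i$. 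Since $g$ is non-decreasing and $g(3) = 2$, we get $\ell_\pi(i) \ge 2$ for all $i \ge 3$, so the first $\lceil k/2 \rceil$ vertices satisfy property~(2).

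The only delicate step in this plan is checking that \Cref{lem:order} remains applicable when $v_1, v_2$ are pinned, and this reduces to the trivial inequality $\ell_\pi(2) \le \ell_\pi(3)$ noted above; everything else is bookkeeping.
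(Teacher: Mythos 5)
Your argument for property~(1) in isolation is sound, but the plan of certifying the two properties separately and then pinning the pair from step~(1) has a genuine gap: the pair $(v_1,v_2)$ produced by the contradiction argument need not be extendable to satisfy property~(2). Concretely, color $K_5$ on $\{a,b,c,d,e\}$ with $ab$ blue, $ac=bc$ red, $ad=bd$ green, $ae=be$ orange, and $cd,ce,de$ three further distinct colors; there is no monochromatic triangle. If red is the most frequent color at $a$ in $G$, then $(a,b)$ satisfies property~(1), but every $w\in\{c,d,e\}$ has $aw$ and $bw$ of the same color, so no choice of $v_3$ yields two colors among $\{v_1v_3,v_2v_3\}$, and $(a,b)$ cannot be extended. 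The claim of course still holds for this $K_5$ (e.g.\ via $(a,d,b)$), but nothing in your step~(1) steers you toward an extendable pair: the two conditions interact, and the paper's induction on $k$ with its case analysis on how the vertices of $W$ attach to the inductive prefix is precisely designed to thread both conditions through simultaneously.

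There is a second, independent defect in step~(2): even granting that minimizing $n_\pi$ with $v_1,v_2$ pinned yields $\ell_\pi$ non-decreasing, the local-Ramsey step ``$\ell_\pi(i)\ge g(i)$'' requires the coloring on $\{v_1,\ldots,v_i\}$ to be $\ell_\pi(i)$-local, which monotonicity of $\ell_\pi$ alone does not provide. In the example above with $a,b,c$ placed first, the sequence $\ell_\pi$ is non-decreasing and $\ell_\pi(3)=1$, yet the induced coloring on $\{a,b,c\}$ is $2$-local (vertex $a$ meets blue and red), so one cannot deduce $\ell_\pi(3)\ge g(3)=2$. Thus this part of the plan would need repair even before the extendability issue above.
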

	\begin{poc}
		The case $k = 2$ is trivial. We proceed by induction on $k$ for $k \ge 3$. Let $c(xy)$ denote the color of the edge $xy$. For each vertex $x$, define the \emph{most frequent color at $x$} to be the color that appears most frequently among the edges incident to $x$.

		For $k = 3$, consider an arbitrary triangle $xyz$ in $G$. Since each color is triangle-free, the colors $xy, xz$, and $yz$ cannot be the same. We may assume that $xy$ and $xz$ have different colors. Then at least one of them is not the most frequent color among the edges incident to $x$, say $xy$. We then take $v_1 = x$ and $v_2 = y$.

		Now assume the claim holds for $k-1$. If $\lceil \frac{k-1}{2} \rceil = \lceil \frac{k}{2} \rceil$, i.e., $k$ is even, the induction hypothesis gives us the desired result directly. So we may assume $k$ is odd.

		Let $U \subseteq V(G)$ be a clique of size $k$. Remove an arbitrary vertex from $U$ and apply the induction hypothesis to the remaining $K_{k-1}$. We obtain a sequence of $\lceil \frac{k}{2} \rceil - 1$ vertices $v_1, \cdots, v_{ \lceil \frac{k}{2} \rceil - 1}$ satisfying the claim. Let $V = \{ v_1, \cdots, v_{ \lceil \frac{k}{2} \rceil - 1} \}$ and set $W = U \setminus V$. Then $|W| = k - |V| = |V| + 1$.

		If there exists a vertex $w \in W$ such that the set of edges $\{wv_1, \cdots, wv_{|V|}\}$ uses at least two distinct colors, then we may take $v_{|V| + 1} = w$ to complete the induction. Otherwise, suppose that for every $w \in W$, all edges between $w$ and $V$ are of the same color. Denote this common color by $c(w)$.

		If for some $w \in W$, the color $c(w)$ is not the most frequent color at $w$, then the sequence $w, v_1, \cdots, v_{|V|}$ satisfies the claim. Indeed, for each $i \ge 2$, both $wv_1$ and $wv_i$ have color $c(w)$, and since $G$ contains no monochromatic triangle, $v_1v_i$ must be a different color. Thus, the edges incident to $v_i$ among earlier vertices use at least two colors.

		So we may assume that $c(w)$ is the most frequent color at $w$ for all $w \in W$.
		\vspace{-1ex}\begin{itemize}\setlength\itemsep{-.8ex}
			\item If there exist $w_1, w_2 \in W$ such that $c(w_1) \neq c(w_2)$, consider the color of the edge $w_1w_2$. Since $c(w_1) \neq c(w_2)$, at least one of them differs from $c(w_1w_2)$, say, $c(w_1w_2) \neq c(w_1)$. Then $c(w_1w_2)$ is not the most frequent color at $w_1$. Therefore, the sequence $w_1, w_2, v_1, \cdots, v_{|V|-1}$ satisfies the claim.
			\item If all $c(w)$'s are equal, say to some color $\gamma$. Since $\gamma$ is triangle-free, no edge within $W$ has color $\gamma$. Therefore, for any edge in $G[W]$, its color differs from the most frequent color at both endpoints. In this case, the desired sequence can be obtained by applying \Cref{lem:order} to $W$. \qedhere
		\end{itemize}
	\end{poc}

	Take $k = s$ in the claim and write $s' = \lceil \frac{s}{2} \rceil$. For each vertex $x$, define $\xi \ce \dst(x) \cdot \dnd(x)$ and set $\ell \ce \frac{\xi^{(s-1)/(s+s'-2)}}{n^{\alpha(s-s')/(s+s'-2)}}$. We construct families $\mF_x$ and $\mG_x$ locally at each vertex $x$ as follows:

	\vspace{-.5ex}\begin{itemize}\setlength\itemsep{-.5ex}
		\item If $\dst(x) \le \ell$, let $\mF_x$ consist of all $s$-tuples $\{x, v_2, \cdots, v_s\}$ such that:
		      \vspace{-1ex}\begin{itemize}\setlength\itemsep{-.4ex}
			      \item $\{x, v_2, \cdots, v_s\}$ induces a clique in $G$;
			      \item For each $i = 3, \cdots, s$, the edges $\{xv_i, v_2v_i, \cdots, v_{i-1}v_i\}$ have at least two distinct colors.
		      \end{itemize}\vspace{-2ex}
		      and set $\mG_x = \varnothing$.
		\item If $\dst(x) > \ell$, then $\dnd(x) \le \xi / \ell$. Let $\mG_x$ consist of all $s'$-tuples $\{x, v_2, \cdots, v_{s'}\}$ such that:
		      \vspace{-1ex}\begin{itemize}\setlength\itemsep{-.4ex}
			      \item $\{x, v_2, \cdots, v_{s'}\}$ induces a clique in $G$;
			      \item $xv_2$ is not of the most frequent color among the edges incident to $x$;
			      \item For each $i = 3, \cdots, s'$, the edges $\{xv_i, v_2v_i, \cdots, v_{i-1}v_i\}$ have at least two distinct colors.
		      \end{itemize}\vspace{-2ex}
		      and set $\mF_x = \varnothing$.
	\end{itemize}\vspace{-1ex}

	In the first case, the color of $xv_2$ has $t$ choices, and for each color, there are at most $\dst(x) \le \ell$ neighbors of $X$. Therefore, there are at most $t \cdot \ell$ choices of $v_2$. For each color pattern $(c_1, \cdots, c_{i-1}) \in [t]^{i-1}$ and fixed $v_2, \cdots, v_{i-1}$, the number of $v_i$ satisfying the conditions is at most $n^\alpha$, for otherwise we obtain a $K_s$-free subset of size at least $n^\alpha$. Thus, the size of $\mF_x$ is at most
	   \[ \bigl|\mF_x\bigr| \le t \cdot \ell \cdot \prod_{i = 3}^s (t^{i-1} \cdot n^\alpha) \le t \cdot \ell \cdot (t^s \cdot n^\alpha)^{s-2} = O \left( \xi^{(s-1)/(s+s'-2)} \cdot n^{\frac{(s-1)(s+s'-4)}{s+s'-2} \cdot \alpha} \right). \]

	In the second case, by a similar argument, the size of $\mG_x$ is at most
	   \[ \bigl|\mG_x\bigr| \le (t-1) \cdot \frac{\xi}{\ell} \cdot (t^{s'} \cdot n^\alpha)^{s'-2} = O \left( \xi^{(s'-1)/(s+s'-2)} \cdot n^{\frac{(s'-1)(s+s'-4)}{s+s'-2} \cdot \alpha} \right). \]

	Since $\E (\xi) \le n^\beta$, we have
	\begin{gather*}
		\E\bigl(\bigl|\mF_x\bigr|\bigr) \le \E \left( O \left( \xi^{(s-1)/(s+s'-2)} \cdot n^{\frac{(s-1)(s+s'-4)}{s+s'-2} \cdot \alpha} \right) \right) \le O\left( n^{\frac{s-1}{s+s'-2} \cdot \beta} \cdot n^{\frac{(s-1)(s+s'-4)}{s+s'-2} \cdot \alpha} \right), \\
		\E\bigl(\bigl|\mG_x\bigr|\bigr) \le \E\left( O \left( \xi^{(s'-1)/(s+s'-2)} \cdot n^{\frac{(s'-1)(s+s'-4)}{s+s'-2} \cdot \alpha} \right) \right) \le O\left( n^{\frac{s'-1}{s+s'-2} \cdot \beta} \cdot n^{\frac{(s'-1)(s+s'-4)}{s+s'-2} \cdot \alpha} \right).
	\end{gather*}

	Let $\mF = \bigcup \mF_x$ and $\mG = \bigcup \mG_x$. By \Cref{lem:new_ind_set}, there exists an independent set of size
	   \[ \Omega \left(n \big/ (n^{\frac{1}{s+s'-2} \cdot \beta} \cdot n^{\frac{s+s'-4}{s+s'-2} \cdot \alpha}) \right) = \Omega \left( n^{\frac{s+s'-3}{s+s'-2}} / n^{\frac{s+s'-3}{s+s'-2} \cdot \alpha} \right) = \Omega( n^\alpha ). \qedhere \]
\end{proof}

Finally, we verify the lower bounds in~\cref{cor:examples}.
\begin{proof}[Proof of lower bounds in~\cref{cor:examples}]
	Let $s = 5$. Then $r_1(3) = 3 \le s=5 < r_2(3)$.

	When $t=2$, by~\cref{lem:lay2}, $a_2=\frac{1}{2}$ and $f_{\alpha_5; K_3,  K_3}(n)\ge \Omega(n^{1/2})$.

	When $t=3$, \Cref{lem:lay3} yields
	   \[ f_{\alpha_5; K_3, K_3, K_3}(n) \,\ge\, \Omega \left( n^{(s+\lceil \frac{s}{2} \rceil - 3)/(2s + 2\lceil \frac{s}{2} \rceil - 5)} \right) \,=\, \Omega(n^{5/11}) \,\eqcolon\, \Omega(n^{a_3}). \]

	When $t=4$, by \Cref{pro:rec_formula}, we have
	   \[ f_{\alpha_5; K_3, K_3, K_3, K_3}(n) \ge \Omega(n^{a_4}), \]
	where
	   \[ \frac{1}{a_4} \,=\, 1 + \frac{1}{4} \left( \frac{1}{a_3} + \frac{1}{a_2} + \frac{1}{a_2} + \frac{1}{a_2} \right) \,=\, 1 + \frac{1}{4} \left( \frac{11}{5} + 2 + 2 + 2 \right) \,=\, \frac{61}{20}. \qedhere\]
\end{proof}

\section{Concluding Remarks} \label{sec:con}
In this paper, we investigated the maximum size of a $K_s$-free subset that can be found in a $(K_b,\cdots,K_b)$-free graph.

For the case of $f_{\alpha_5; K_3, K_3, K_3}(n)$, our bounds suggest that its growth is fairly close to $\widetilde{\Theta}(n^{1/2}) = f_{\alpha_5; K_3, K_3}(n)$. It would be interesting to determine its exact growth. For $f_{\alpha_5; K_3, K_3, K_3}(n)$, the exponent is in $[5/11,1/2]$, we think the main task is to construct examples in which the third color is utilized more effectively, thereby separating it from the two-color case.

In the analysis of $f_{\alpha_5; K_3, K_3, K_3, K_3}(n)$ in \Cref{pro:rec_formula}, the dominant contribution to the number of $K_5$'s in the graph $G$ arises from the set of two-colored $K_5$'s formed as the union of two monochromatic $C_5$'s, as other colorings of $K_5$ can be shown to contribute at most $o(n \cdot \alpha_5(G)^8)$. A more refined estimate on the number of such $K_5$'s would lead to an improved bound on $f_{\alpha_5; K_3, K_3, K_3, K_3}(n)$. It might even be possible to determine the asymptotic value without first resolving the previous case $f_{\alpha_5; K_3, K_3, K_3}(n)$.

\section*{Acknowledgment}
This work was initiated during the $2^{\text{nd}}$ ECOPRO Student Research Program at the Institute for Basic Science (IBS) in the summer of 2024. Luo and Ouyang are very grateful for the kind hospitality of IBS.

\end{document}